\documentclass{article}
\usepackage[margin = 1.5in]{geometry}

\usepackage{times}
\usepackage{bm}
\usepackage{natbib}
\usepackage{amsthm, amsmath, amssymb, mathrsfs, graphicx, multirow, url, subfigure}
\usepackage{textpos}
\setlength{\TPHorizModule}{\textwidth}
\setlength{\TPVertModule}{\textheight}
\usepackage{ifthen}
\usepackage[usenames]{color}

\theoremstyle{plain}
\newtheorem{theorem}{Theorem}
\newtheorem{lemma}[theorem]{Lemma}

\theoremstyle{definition}

\newtheorem{example}{Example}[section]

\newcommand{\F}{\mathbb{F}}

\newcommand{\I}{\mathscr{I}}

\newcommand{\OO}{\mathbb{O}}
\newcommand{\RR}{\mathbb{R}}
\renewcommand{\SS}{\mathbb{S}}
\newcommand{\TT}{\mathbb{T}}

\newcommand{\nmin}{n_{\mathrm{min}}}
\newcommand{\Psitilde}{\bar{\Psi}}
\newcommand{\Ftilde}{\bar{F}}

\newcommand{\E}{\mathsf{E}}

\newcommand{\cov}{\mathsf{Cov}}
\newcommand{\prob}{\mathsf{P}}
\DeclareMathOperator{\tr}{tr}

\newcommand{\be}{{\sf Be}}
\newcommand{\nm}{{\sf N}}
\newcommand{\dpp}{{\sf DP}}
\newcommand{\dpm}{{\sf DPM}}
\newcommand{\stt}{{\sf t}}
\newcommand{\wish}{{\sf W}}
\newcommand{\iwish}{{\sf IW}}
\newcommand{\fdist}{{\sf F}}

\newcommand{\eps}{\varepsilon}
\renewcommand{\phi}{\varphi}

\newcommand{\iid}{\overset{\text{\tiny iid}}{\sim}}

\newcommand{\jth}{j\text{th}}

\newcommand{\imp}{\text{imp}}

\newcommand{\h}[1]{{ #1}}

\begin{document}

\title{Bayesian test of normality versus a Dirichlet process mixture alternative}

\author{Surya T. Tokdar \\ Department of Statistical Science, Duke University, \\ Durham, North Carolina 27708, U.S.A. \and Ryan Martin\\ Department of Statistics, North Carolina State University\\ Raleigh, North Carolina 27607, U.S.A.}

\maketitle

\begin{abstract}
We propose a Bayesian test of normality for univariate or multivariate data against alternative nonparametric models characterized by Dirichlet process mixture distributions.  The alternative models are based on the principles of embedding and predictive matching.  They can be interpreted to offer random granulation of a normal distribution into a mixture of normals with mixture components occupying a smaller volume the farther they are from the distribution center. A scalar parametrization based on latent clustering is used to cover an entire spectrum of separation between the normal distributions and the alternative models.  An efficient sequential importance sampler is developed to calculate Bayes factors. Simulations indicate the proposed test can detect non-normality without favoring the nonparametric alternative when normality holds. 

{Key words:} Bayes factor; embedding; goodness-of-fit; importance sampling; noninformative prior; predictive matching.
\end{abstract}

\section{Introduction}
\label{S:intro}

Professor Jayanta K.~Ghosh has left behind a lasting legacy in many areas of statistics research. Three prominent such areas are (a) formal/objective Bayes, (b) nonparametric models, and (c) model comparison/selection. One interesting problem that lies at the intersection of these three areas is the question of formally assessing the fit of a parametric model against nonparametric alternatives \citep{tokdar2010bayesian}. In this paper we attempt to {\it settle this question} when assessing the fit of univariate or multivariate normal models. Although several goodness-of-fit tests exist for assessing normality with the usual emphasis on the null model \citep{cardoso2010multivariate,aldor2013power, voinov2016new}, currently lacking in the literature is a satisfactory formal Bayesian solution that also places equal emphasis on the alternatives, i.e., on the possible modes of departure from normality. 

The availability of nonparametric alternatives means  parametric models are no longer indispensable.  At the same time, when appropriate, they provide considerable simplification and more penetrative inference compared to a nonparametric model.  But it is important that they are first tested for appropriateness.  In some cases parametric models directly represent a precise scientific hypothesis, such as the Gaussianity of the Cosmic Microwave Background \citep[e.g.,][]{barreiro2007gaussian}.  In many other cases parametric models provide the clearest modeling framework to embed a scientific hypothesis. For example, the hypothesis of flipping between stationary states by a neuron in response to multiple stimuli \citep{abeles1995cortical, jones2007natural} is most easily tested when stationary states are described by identifiable parametric models.  When additional data are available from single stimulus trials, the parametric model can be and should be tested first.

\h{
A formal Bayesian assessment of model fit is challenging for several reasons. It requires a complete specification of an alternative model and involves the difficult calculation of the Bayes factor: the ratio between the marginal data likelihood scores under the null and alternative specifications. In assessing the fit of a parametric model it is nearly impossible to specify a broad alternative model without using subjective knowledge. But progress has been made in this direction with advances in nonparametric Bayes methodology; see \cite{berger2001bayesian, verdinelli1998bayesian, florens1996bayesian, carota1996bayes}. These authors have advocated for a certain level of formalism in choosing a nonparametric alternative that is not only an attractive model for data analysis, but can also be viewed as an extension of the null parametric model that remains non-informative with respect to the parameters of the null model.

In particular, \cite{berger2001bayesian} advocate choosing a nonparametric alternative that is {\it balanced} against the parametric null in the sense of {\it embedding} and {\it predictive matching} properties. Loosely speaking, embedding refers to the property that the alternative model space can be partitioned in such a way that each partition represents an unbiased relaxation of one and only one element of the parametric model space. Given such an embedding, the same parameter $\theta$ that indexes the elements of the null model could be used to index the partitions of the alternative model, and, a common, noninformative prior may be used on $\theta$ under either specification. Predictive matching formalizes this correspondence in a strong technical way, demanding that the Bayes factor remain neutral between the null and the alternative until one has accumulated sufficient amount of data. That ``sufficient amount'' is taken to be the minimum sample size needed to get proper posteriors on $\theta$ under both specifications.}

\h{With this formalism in mind,} we pursue a new Bayesian method for assessing the fit of the normal model to univariate or multivariate data. Currently there are two fully developed approaches toward assessing the fit of the normal model, the Gaussian process approach of \citet{verdinelli1998bayesian} and the Polya tree approach of \citet{berger2001bayesian}. We propose a new alternative model based on a Dirichlet process location-scale mixture of normals \citep{lo1984class} with several advantages over these existing techniques.

The Gaussian process approach is difficult to compute with and does not allow for embedding and predictive matching.  The P\'olya tree approach is easy to work with for univariate data.  But its reliance on partition-based computing does not scale well with data dimension.  Moreover, a Polya tree distribution is a model for densities that are nowhere differentiable \citep{choudhuri2005bayesian}.  This may lead to inefficient estimation under the alternative \citep{van2008rates, castillo2008lower} which, in turn, may lead to a sub-optimal detection of non-normality.  Our simulation study provides evidence supporting this claim.

In contrast, our Dirichlet process mixture of normals model is in itself an attractive model for estimating a smooth density. Dirichlet process mixture of normals have been well studied in the literature and are known to be easy to compute with, often via efficient Gibbs sampling or its variations \citep{escobar1995bayesian, maceachern1998estimating, maceachern1998computational, neal2000markov}, and are known also to possess optimal, adaptive convergence rates in a variety of density estimation applications \citep{ghosal2001entropies, ghosal2007posterior, shen2013adaptive}.

In formulating a Dirichlet process mixture of normals, we diverge slightly from standard constructions and use a {\it normal-multivariate-beta} base measure (Section \ref{SS:model}); \h{see \citet{griffin2010default} for a related formulation}. This helps us construct a collection of Dirichlet process mixture of normals priors which are mapped one-to-one to the collection of all normal densities. Each element of the alternative model may be understood as a random granulation of the corresponding normal density into a mixture of normals with the volume of a mixture component negatively correlated with its lateral shift from the center. Our alternative model is parametrized by a single scalar parameter, the {\it precision} parameter of the underlying Dirichlet process. The precision parameter controls the extent of granulation, i.e., latent clustering, which is key in determining the separation between the null and the alternative. Other potential model parameters, such those controlling the extent of lateral shifts of the mixture components, are carefully mapped to the precision parameter to avoid identifiability problems when precision is close to zero or infinity.

Despite the slightly different formulation, our Dirichlet process mixture of normals model is amenable to Gibbs sampling for posterior computation and to sequential imputation \citep{liu1996nonparametric} and posterior ordinate calculation \citep{basu2003marginal} for Bayes factor computation. In Section~\ref{S:computation}, we propose a reasonably efficient algorithm for Bayes factor computation by adapting Liu's sequential imputation technique to our formulation and augmenting it with importance sampling to deal with additional parameters that are not part of the Dirichlet process mixing distribution. This algorithm is demonstrated to perform much better than two reasonable adaptations of the posterior ordinate approach \citep{basu2003marginal}. \h{For analyzing multivariate data, we propose an extension of this algorithm that uses a Rao--Blackwellized parameter augmentation technique, borrowing ideas from sequential Monte Carlo.}  

Section~\ref{SS:comp} presents a simulation study of the proposed method's Type I and II error probabilities within a frequentist setting of hypothesis testing. In a univariate setting, the resulting test is found to offer moderate to large improvements in power for a given size when compared to a test based on the P\'olya tree approach \citep{berger2001bayesian} and the classical Anderson--Darling test. In Section \ref{SS:consistency} we address the important issue of Bayes factor consistency \citep{tokdar2010bayesian} which refers to the desirable frequentist property: Bayes factor goes to $\infty$ under the null and goes to $0$ under the alternative asymptotically as sample size grows to infinity. We do not consider a full theoretical study of Bayes factor consistency, due to severe technical challenges, but we provide a large sample simulation study with sample size up to 5000. Our simulations give strong evidence of consistency under the null. Consistency under the alternative is well expected for Dirichlet process mixture models \citep[][Section 4]{tokdar2010bayesian}.

\section{A Dirichlet mixture of normals method for testing normality}
\subsection{Formalization of the testing problem}
\label{SS:formulation}

Consider data $X_{1:n} = (X_1, \ldots, X_n)$ where $X_i \in \RR^p$, $i = 1, \ldots, n$ are modeled as $n$ independent draws from an unknown common probability distribution $F$. Let $F_{\mu, \sigma}$ denote a $p$-variate normal distribution with mean $\mu$ and covariance matrix $\sigma\sigma^\top$ in Cholesky decomposition form and define $\F_0 = \{F_{\mu,\sigma}: \mu \in \RR^p, \sigma \in \TT_p\}$ where $\TT_p$ is the set of all $p \times p$ lower-triangular matrices with positive diagonal elements. Our goal is to test $H_0: F \in \F_0$.  

Unlike classical goodness-of-fit tests, any Bayesian approach to this testing problem requires two additional model ingredients.  First, the null model requires a possibly improper prior distribution $\pi_0$ on $\RR^p \times \TT_p$.  Second, an  alternative model $H_1: F \in \F_1$ is required, along with a prior $\Pi_1$ on $\F_1$.  For a non-subjective treatment, it is natural to choose $\F_1$ an infinite-dimensional subset of probability measures on $\RR^p$, and $\Pi_1$ a probability measure supported on $\F_1$.  Once the priors $\pi_0$ and $\Pi_1$ are specified, one can report the Bayes factor 
\begin{equation}
\label{eq:bayes.factor}
B = \frac{ \int_{\RR^p \times \TT_p} \bigl\{ \prod_{i=1}^n dF_{\mu,\sigma}(x_i) \bigr\} \,d\pi_0(\mu,\sigma) }{ \int_{\F_1} \bigl\{ \prod_{i=1}^n dF(x_i) \bigr\} \,d\Pi_1(F) }
\end{equation}
as a measure of evidence against $H_0$ when $X_{1:n} = x_{1:n}$ are observed. Small $B$ indicates the parametric model provides an unsatisfactory fit to the data. Refer to \citet{kass1995bayes} for more on the Bayes factor and its interpretation.

\subsection{Local alternative, null embedding and a new Dirichlet process mixture}
\label{SS:model}

For a non-subjective test, \citet{berger2001bayesian, verdinelli1998bayesian, florens1996bayesian} and \citet{carota1996bayes} stress on the importance of maintaining balance between the null model and the non-parametric alternative. All these authors recommend specifying $\Pi_1$ as $\int \Pi_{\mu, \sigma} d\pi_1(\mu, \sigma)$ a mixture of local alternatives $\Pi_{\mu, \sigma}$ mapped one-to-one to the elements $F_{\mu, \sigma}$ of the null model. Most of these authors require this mapping to be given by embedding the null element as the mean of the local alternative: $\int F d\Pi_{\mu, \sigma}(F) = F_{\mu, \sigma}$ for every $(\mu, \sigma)$. This is difficult to achieve with the commonly used Dirichlet process mixture of normals \citep{escobar1995bayesian} that use a normal-inverse-Wishart base measure. We offer the following modification along the lines of \citet{griffin2010default}.

Let $\SS_p$ be the space of $p \times p$ symmetric positive definite matrices with all $p$ eigenvalues in $(0,1)$.  For scalars $\omega_1$ and $\omega_2$ greater than $(p-1)/2$, let $\be(\omega_1,\omega_2)$ denote the multivariate beta distribution on $\SS_p$ \citep[][Chap.~3.3]{muirhead2005aspects} having density 
\begin{equation}
\label{eq:mbeta}
\be(v \mid \omega_1,\omega_2) = a_p(\omega_1,\omega_2)(\det v)^{\omega_1 - (p+1)/2} \{\det(I_p - v)\}^{\omega_2 - (p+1)/2}, 
\end{equation}
where $I_p$ is the $p \times p$ identity matrix and $a_p(\omega_1, \omega_2) = \Gamma_p(\omega_1 + \omega_2) / \Gamma_p(\omega_1) \Gamma_p(\omega_2)$, with $\Gamma_p$
the $p$-variate gamma function.  Write $\Psi$ for the probability measure on $\RR^p \times \SS_p$ given by the law of $(U,V)$, where $V \sim \be(\omega_1,\omega_2)$ and $U \mid V \sim \nm(0,I_p-V)$.  This law is well-defined, since $I_p-V \in \SS_p$ with probability~1.  

Let $\dpp(\alpha,\Psi)$ denote the Dirichlet process distribution with precision $\alpha > 0$ and base measure $\Psi$ from above \citep{ferguson1973bayesian}.  Recall that $\Psitilde \sim \dpp(\alpha,\Psi)$ means that for any positive integer $k$ and any measurable partition $B_1,\ldots,B_k$ of $\RR^p \times \SS_p$, the probability vector $\{\Psitilde(B_1),\ldots,\Psitilde(B_k)\}$ has a $k$-dimensional Dirichlet distribution with parameters $\{\alpha \Psi(B_1),\ldots,\alpha \Psi(B_k)\}$.  For any $(\mu,\sigma)$, let $\dpm_{\mu,\sigma}(\alpha,\Psi)$ denote the distribution of the random probability measure 
\begin{equation}
\label{eq:dpm}
\Ftilde_{\mu,\sigma} = \int \nm(\mu + \sigma u, \sigma v \sigma^\top) \,d\Psitilde(u,v), \quad \text{where} \quad \Psitilde \sim \dpp(\alpha,\Psi);
\end{equation}
then we have the following result.
\begin{theorem}
\label{thm:embedding}
For any $(\mu,\sigma)$ and any $\alpha$, the mean of $\dpm_{\mu,\sigma}(\alpha,\Psi)$ is $\nm(\mu,\sigma\sigma^\top)$.  
\end{theorem}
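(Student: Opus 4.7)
The plan is to combine the mean-measure property of the Dirichlet process with a straightforward conditional Gaussian calculation. First, I would invoke the standard fact that if $\Psitilde \sim \dpp(\alpha,\Psi)$, then $\E[\Psitilde(B)] = \Psi(B)$ for every measurable $B$. Applying Fubini (the normal density kernel is nonnegative and integrates to one), this lifts to integrals against the kernel and gives, for any Borel $A \subset \RR^p$,
\begin{equation*}
\E[\Ftilde_{\mu,\Lambda}(A)] \;=\; \int \nm\bigl(A \,\big|\, \mu + \Lambda u,\; \Lambda V \Lambda'\bigr)\,d\Psi(u,V).
\end{equation*}
So the task reduces to showing that the right-hand side equals $F_{\mu,\Lambda}(A)$, i.e., that the mixture on the right is exactly the $\nm(\mu,\Lambda\Lambda')$ distribution.

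Next, I would recast this mixture as the marginal law of a hierarchically generated random vector. Sample $(U,V) \sim \Psi$, and then $X \mid (U,V) \sim \nm(\mu + \Lambda U, \Lambda V \Lambda')$. Affinely transforming, $Z := \Lambda^{-1}(X - \mu)$ satisfies $Z \mid (U,V) \sim \nm(U, V)$. Writing $Z = U + W$ with $W \mid (U,V) \sim \nm(0,V)$, the conditional law of $W$ does not depend on $U$, so given $V$ the summands $U$ and $W$ are independent Gaussians with $U \mid V \sim \nm(0, I_p - V)$ (by the definition of $\Psi$) and $W \mid V \sim \nm(0, V)$. Hence
\begin{equation*}
Z \mid V \;\sim\; \nm\bigl(0,\; (I_p - V) + V\bigr) \;=\; \nm(0, I_p),
\end{equation*}
which does not depend on $V$. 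Therefore the unconditional law of $Z$ is also $\nm(0,I_p)$, whence $X \sim \nm(\mu, \Lambda\Lambda') = F_{\mu,\Lambda}$, completing the argument.

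The argument is essentially a two-line calculation once one notices the designed telescoping $(I_p - V) + V = I_p$, which is precisely the point of pairing the multivariate beta prior on $V$ with a $\nm(0, I_p - V)$ conditional for $U$. The only minor technicality is the measure-theoretic step of interchanging the DP expectation with the normal kernel integral; this can be handled either by a monotone class argument or by first verifying the identity for indicators of measurable rectangles and then extending to general Borel sets in $\RR^p$. No property of the DP beyond its mean measure is used, which is why $\alpha$ plays no role in the statement.
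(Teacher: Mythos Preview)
Your proof is correct and follows essentially the same approach as the paper: reduce the mean of $\dpm_{\mu,\Lambda}(\alpha,\Psi)$ to the mixture $\int \nm(\mu+\Lambda u,\Lambda V\Lambda')\,d\Psi(u,V)$ via the Dirichlet process mean-measure property, then evaluate that mixture using the Gaussian convolution identity. The paper simply invokes the convolution identity directly on the inner $u$-integral, whereas you spell it out through the affine reduction $Z=\Lambda^{-1}(X-\mu)=U+W$ and the telescoping $(I_p-V)+V=I_p$, but the argument is the same.
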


\begin{proof}
For an $\bar F_{\mu, \sigma}$ as in \eqref{eq:dpm}, its expectation is simply $\int \nm(\mu + \sigma u, \sigma v \sigma^\top) \,d\Psi(u, v) = \int \{\int \nm(\mu + \sigma u, \sigma v\sigma^\top) \, d\nm(u \mid 0, I_p - v)\} \, d\be(v \mid \omega_1, \omega_2)$ by definition of $\Psi$. But the inner integral always equals $\nm(\mu, \sigma\sigma^\top)$ by the well-known Gaussian convolution identity.
\end{proof}

We choose $\dpm_{\mu, \sigma}(\alpha, \Psi)$ as the local alternative $\Pi_{\mu, \sigma}$ to $F_{\mu, \sigma}$, with Theorem \ref{thm:embedding} ensuring local embedding. It is more convenient to write our null and alternative models in the following hierarchical manner.
\begin{align}
H_0: & \; X_{1:n} \mid (\mu,\sigma) \iid F_{\mu,\sigma}, \quad (\mu,\sigma) \sim \pi_0 \label{eq:null} \\
H_1: & \; X_{1:n} \mid (\Ftilde_{\mu,\sigma}, \mu, \sigma) \iid \Ftilde_{\mu,\sigma}, \quad \Ftilde_{\mu,\sigma} \mid (\mu,\sigma) \sim \dpm_{\mu,\sigma}(\alpha,\Psi), \quad (\mu,\sigma) \sim \pi_1; \label{eq:alt}
\end{align}
The choice of $\pi_0$, $\pi_1$ will be discussed in Section \ref{SS:pies}.

\subsection{Understanding local alternative as a random granulation}
\label{SS:clusters}

For any space $S$ and an $s \in S$, let $\langle s\rangle$ denote the degenerate probability distribution on $S$ with point mass at $s$. Due to the stick-breaking representation of a Dirichlet process \citep{sethuraman1994constructive} a random $\Psitilde \sim \dpp(\alpha, \Psi)$ can be written as 
\begin{equation}
\label{eq:stick.break}
\Psitilde = \sum_{h \geq 1} q_h \langle (U_h, V_h) \rangle, 
\end{equation}
where $(U_h, V_h)$, $h \geq 1$, are independently draws from $\Psi$, $q_h = \beta_h \prod_{j < h} (1-\beta_j)$ and $\beta_h$, $h \geq 1$, are independent draws from a univariate $\be(1,\alpha)$ distribution.  The vector $q_{1:\infty} = (q_1, q_2, \ldots)$ satisfies $q_h \geq 0$ and $\sum_h q_h = 1$, with probability~1.  Consequently, given $(\mu,\sigma)$, a draw from the local Dirichlet process mixture alternative $\dpm_{\mu,\sigma}(\alpha,\Psi)$ can be written as
\begin{equation}
\label{eq:local.draw}
\Ftilde_{\mu,\sigma} = \sum_{h \geq 1} q_h \nm(\mu + \sigma U_h, \sigma V_h \sigma^\top),
\end{equation}
with $(q_h, U_h, V_h)$, $h \geq 1$, as described above.  Therefore, given $(\mu,\sigma)$, the local alternative $X_{1:n} \iid \Ftilde_{\mu,\sigma}$ is equivalent to saying that the $X_i$'s are independently with distribution $\nm(\mu + \sigma U_{h_i}, \sigma V_{h_i} \sigma^\top)$ where the $h_i$'s are randomly drawn labels with $\prob(h_i = h)  = q_h$.  Ties among the $h_i$'s partition the data $X_{1:n}$ into clusters, where the $X_i$'s in a cluster are independent $\nm(\mu+\sigma U_h, \sigma V_h \sigma^\top)$ observations, with $(U,V) \sim \Psi$.  The center of this cluster is at a $\sigma U$ shift from the center $\mu$ of the null element $\nm(\mu,\sigma\sigma^\top)$ and occupies a $(\det V)^{1/2} \in (0,1)$ fraction of the corresponding volume.  Theorem~\ref{thm:neg.cov} shows that the magnitude $(U^\top U)^{1/2}$ of the shift, relative to $\sigma$, is stochastically inversely related to the volume fraction $(\det V)^{1/2}$.

\begin{theorem}
\label{thm:neg.cov}
If $(U,V) \sim \Psi$, then $\cov(U^\top U, \det V) \leq 0$.  
\end{theorem}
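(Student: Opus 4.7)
The strategy is to condition on $V$ to reduce the claim to a statement about the eigenvalues of $V$, and then invoke an FKG-type correlation inequality. Because $U \mid V \sim \nm(0, I_p - V)$, the tower property gives $\E[U'U \mid V] = \tr(I_p - V) = p - \tr V$, and hence
\begin{align*}
\cov(U'U, \det V)
&= \E\{(p - \tr V)\det V\} - (p - \E[\tr V])\, \E[\det V] \\
&= -\cov(\tr V, \det V).
\end{align*}
It therefore suffices to show $\cov(\tr V, \det V) \geq 0$ under $V \sim \be(\omega_1, \omega_2)$.

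Both $\tr V = \sum_i \lambda_i$ and $\det V = \prod_i \lambda_i$ depend on $V$ only through its eigenvalues $(\lambda_1, \ldots, \lambda_p)$. Since the density \eqref{eq:mbeta} is invariant under $V \mapsto OVO'$ for orthogonal $O$, a standard change of variables \citep[e.g.,][Ch.~3.3]{muirhead1982} shows that the joint density of the ordered eigenvalues on the Weyl chamber $W = \{0 < \lambda_1 < \cdots < \lambda_p < 1\}$ is proportional to
\[
\prod_{i=1}^p \lambda_i^{\omega_1 - (p+1)/2}\, (1-\lambda_i)^{\omega_2 - (p+1)/2} \prod_{i<j}(\lambda_j - \lambda_i).
\]
The leading product is separable, hence log-supermodular; each factor $\log(\lambda_j - \lambda_i)$ of the Vandermonde term has mixed partial $1/(\lambda_j - \lambda_i)^2 > 0$ and is thus supermodular. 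So the full eigenvalue density is log-supermodular on $W$, which is easily seen to be a sublattice of $\RR^p$ under coordinatewise min and max. By the continuous FKG (Holley) inequality, any two coordinatewise-nondecreasing functions on $W$ are nonnegatively correlated; since both $\sum_i \lambda_i$ and $\prod_i \lambda_i$ are coordinatewise nondecreasing on $(0,\infty)^p$, we conclude $\cov(\tr V, \det V) \geq 0$, finishing the proof.

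The main obstacle lies in justifying the FKG step. A direct matrix-level argument via log-concavity of the matrix-beta density fails uniformly because the exponents $\omega_k - (p+1)/2$ need not be nonnegative, and a brute-force evaluation of $\E[\tr V \cdot \det V]$ using matrix-gamma moment identities produces an unwieldy comparison with $\E[\tr V]\E[\det V]$ whose sign is not transparent. Passing to the ordered-eigenvalue density and exploiting the log-supermodular structure furnished by the Vandermonde factor circumvents both difficulties and reduces the proof to elementary monotonicity checks.
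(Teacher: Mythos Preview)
Your proof is correct. The conditioning step $\cov(U'U,\det V)=-\cov(\tr V,\det V)$ is exactly the paper's first move. For the remaining inequality $\cov(\tr V,\det V)\geq 0$ the two arguments diverge only in presentation: the paper passes to the Wishart representation of the matrix-beta eigenvalues and invokes \citet{dykstra.hewett.1978}, whose Section~5 result is itself an FKG/association statement for eigenvalues. You instead write down the ordered-eigenvalue density explicitly and verify the MTP$_2$ condition directly from the Vandermonde factor, then apply the continuous FKG inequality. Your route is more self-contained (no Wishart detour, no external correlation lemma) and makes transparent \emph{why} the positive association holds---namely the repulsive $\prod_{i<j}(\lambda_j-\lambda_i)$ interaction---whereas the paper's route is shorter on the page by outsourcing that calculation. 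Substantively the two are the same argument; your version unpacks what the Dykstra--Hewett citation encapsulates.
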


Therefore, for given $(\mu,\sigma)$, $\Ftilde_{\mu,\sigma}$ in \eqref{eq:local.draw} can be seen as local granulation of a population of fine particles evenly distributed according to $\nm(\mu,\sigma\sigma^\top)$.  The local granulations forms clusters with bell-shaped curves, each occupying only a fraction of the total volume of the population.  The further the cluster center is from the original $\nm(\mu,\sigma\sigma^\top)$ population center, the smaller the cluster size is likely to be.  

\subsection{Separation between null and alternative and the choice of $\omega_1,\omega_2$}
\label{SS:omega}

All three parameters $\alpha$, $\omega_1$ and $\omega_2$ contribute to making the alternative look different from the null. The precision parameter $\alpha$ controls the degree of clustering, i.e., the prevalence of ties among the cluster labels $h_i$ introduced above \citep[see][Chap.~3]{ghosh2003bayesian}. Base measure parameters $\omega_1$, $\omega_2$ control lateral shifts and relative volumes of the cluster components. We argue that it is important to link the specification of $(\omega_1, \omega_2)$ to that of $\alpha$, because otherwise the alternative model may acquire strange features that go against the notion of local embedding. 

If we fix $\omega_1, \omega_2$, thus fixing the base measure $\Psi$, and let $\alpha \to 0$ then $\dpp(\alpha,\Psi)$ converges weakly to the law of the random degenerate distribution $\langle(U, V)\rangle$ with $(U, V)$ drawn from $\Psi$ \citep[e.g.,][Chapter 3.2]{ghosh2003bayesian}. Hence, for $\bar F \sim \dpm_{\mu,\sigma}(\alpha,\Psi)$, the limiting law of $\bar F$ as $\alpha \to 0$ can be described as: $\bar F = \nm(\mu + \sigma U, \sigma V \sigma^\top)$, $(U, V) \sim \Psi$. In the limit, separation between the overall null and the overall alternative models vanishes, as both concentrate on the normal distributions. However, a positive difference remains between the local alternative $\dpm_{\mu, \sigma}(\alpha, \Psi)$ and $F_{\mu, \sigma}$. This discrepancy between global and local separations goes away if we make $\omega_1, \omega_2$ depend on $\alpha$ so that $\omega_1 / (\omega_1 + \omega_2) \to 1$ as $\alpha \to 0$. With such a choice of $(\omega_1, \omega_2)$, $\Psi$ converges to $\langle (0,I_p) \rangle$ as $\alpha \to 0$, and consequently the local alternative $\dpm_{\mu,\sigma}(\alpha,\Psi)$ collapses onto $\langle F_{\mu,\sigma} \rangle$ in the limit.

Irrespective of the choice of $(\omega_1, \omega_2)$, the local and global separations between the null and the alternative vanish as $\alpha \to \infty$. This is because, as $\alpha \to \infty$, $\dpp(\alpha,\Psi)$ converges to $\langle\Psi\rangle$ \citep[][Theorem~3.2.6]{ghosh2003bayesian}.  Consequently, for any $(\mu,\sigma)$, $\dpm_{\mu,\sigma}(\alpha,\Psi)$ converges to $\langle F_{\mu,\sigma}\rangle$ since $\int \nm(\mu + \sigma u, \sigma  v \sigma^\top) \, d\Psi(u, v) = \nm(\mu, \sigma\sigma^\top)$.  However the nature of this convergence depends on the limiting behavior of $\omega_1 / (\omega_1 + \omega_2)$. In particular, choosing $\omega_1 / (\omega_1 + \omega_2) \to 0$ as $\alpha \to \infty$ brings in some additional, useful flexibility of the alternative model. For large values of $\alpha$, the stick-breaking representation \eqref{eq:stick.break} of an $\Ftilde_{\mu,\sigma} \sim \dpm_{\mu,\sigma}(\alpha,\Psi)$ does not contain any dominating $q_h$ and is thus made up of small contributions from many normal components. If in addition $\omega_1 / (\omega_1 + \omega_2)$ is close to 0, then all these components have tiny relative volumes, but together they resemble the shape of $\nm(\mu,\sigma\sigma^\top)$. Such a model allows detection of non-normal distributions that have an overall shape like a bell curve, but possess sharp local features.  

 Based on these two limit scenarios, we recommend mapping the choice of $\omega_1, \omega_2$ to that of $\alpha$ such that ${\omega_1}/{(\omega_1 + \omega_2)}$ converges to 1 as $\alpha \to 0$ and converges to 0 as $\alpha \to \infty$. An optimal choice of $\omega_1, \omega_2$ satisfying these limits remains an open question. We have carried out a limited simulation study with $\omega_1, \omega_2$ of the form: $\omega_1 = c + g(1/\alpha)$ and $\omega_2 = c + g(\alpha)$ for some $c \ge (p  - 1)/2$ and some monotone increasing function $g$. In our study (not reported) reasonable testing performance was obtained if we picked $g(x) = x^k$ where the power $k$ increased with dimension. In the  experiments reported in Sections \ref{S:illustrations} and \ref{S:simu} we use $c = k = (p + 1)/2$, that is, our specification of $(\omega_1, \omega_2)$ is
\begin{equation}
\label{eq:omega}
\omega_1 = \tfrac{p + 1}{2} + \alpha^{-\frac{p + 1}{2}} \quad \text{and} \quad \omega_2 = \tfrac{p + 1}{2} + \alpha^{\frac{p + 1}{2}}.
\end{equation}
As reported in Section \ref{SS:comp}, this choice of $(\omega_1, \omega_2)$ leads to a fairly accurate testing procedure. Figure \ref{fig:omegas} shows one random draw from $\dpm_{0, 1}(\alpha, \Psi)$ for the univariate case, with $\Psi$ determined as by \eqref{eq:omega} for three choices of $\alpha \in \{2^{-6}, 2^2, 2^{10}\}$. For small $\alpha$, there is little difference between the $\nm(0,1)$ and its local alternative. For large $\alpha$, there is an overall shape resemblance, but the alternative possesses sharp features. Broad shape differences are noticed for an intermediate $\alpha$ value. 

\begin{figure}
\centering
\includegraphics[scale = 0.8]{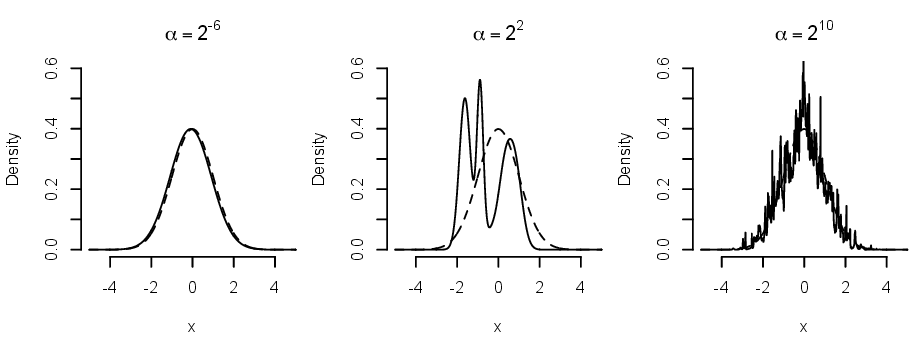}
\caption{Comparison between $\nm(0,1)$ and its local alternative $\dpm_{0,1}(\alpha, \Psi)$ for $p = 1$, with $\omega_1, \omega_2$ as chosen in \eqref{eq:omega}. One draw each (solid line)  from the local alternative for three choices of $\alpha$. Dashed line shows $\nm(0, 1)$ density.}
\label{fig:omegas}
\end{figure}

A concern over the coupling between $(\omega_1, \omega_2)$ and $\alpha$ is whether the alternative is ever allowed to separate from the null for intermediate $\alpha$ values. Although we do not have a theoretical result to resolve this issue, in all our numerical studies in Section \ref{S:illustrations}, except for when data are simulated from a normal distribution, the Bayes factor is found to attain very large magnitudes for a reasonably wide range of intermediate $\alpha$ values.  Because the alternative always embeds the null as its center, the only way it can concede so much ground to the null is by being fairly disperse around it.

\subsection{Predictive matching and choice of $\pi_0,\pi_1$}
\label{SS:pies}

For many parametric models $\mathbb{F}_0$, a default choice of $\pi_0$, usually improper, can be obtained through formal arguments such as invariance. A common choice for the normal model is the left Haar measure $\pi_L$ on $\RR^p \times \TT_p$, given by $d\pi_L(\mu,\sigma) = \prod_{j=1}^p \sigma_{jj}^{-j}\,d\mu\,d\sigma$, where $\sigma_{jj}$ is the $\jth$ diagonal element of $\sigma$. The corresponding prior on $(\mu, \Sigma = \sigma\sigma^\top)$ has the more familiar form: $\pi_L(\mu, \Sigma) =  (\det \Sigma)^{-(p+1)/2}$ and is also known as the independence Jeffreys' prior \citep{sun2007objective}.

In light of the null embedding property, it is tempting to choose $\pi_1 = \pi_0$ so that the elements of $\RR^p \times \TT_p$ are weighted the same under the null and alternative models.  \citet{berger2001bayesian} find this reasoning insufficient and argue that the choice $\pi_1 = \pi_0$ is partially justified whenever the predictive distribution of a hypothetical sample of size $\nmin$ is the same under the two models, where $\nmin$ is the minimal sample size needed to obtain a proper posterior for $(\mu,\sigma)$ under either model. They refer to this property as ``predictive matching''.

We show that \eqref{eq:null} and \eqref{eq:alt} have the predictive matching property with $\pi_0 = \pi_1 = \pi_L$. Toward this we present the following powerful result which gives a multivariate extension of a similar result in \citet{berger1998bayes}. We first need some notations and nomenclature. For any probability measure $F$ on $\RR^p$, let $F^{\times k}$ denote the $k$-fold product measure, i.e., $F^{\times k}$ is the probability law of $X_{1:k} = (X_1, \ldots, X_k)$ when the $X_i$'s are independent and identically distributed as $F$. Then $M_{\Pi,k} = \int F^{\times k} \,d\Pi(F)$ is the prior-predictive joint distribution of a sample from the model $X_{1:k} \sim F^{\times k}$, $F \sim \Pi$. If $F$ is almost surely absolutely continuous with respective to the Lebesgue measure, then $M_{\Pi, k}$ has a Lebesgue density $m_{\Pi, k}$ and $m_{\Pi, k}(x_{1:k})$ gives the marginal likelihood when data $x_{1:k}$ is observed for $X_{1:k}$.

A collection $\{\Pi_{\mu,\sigma}: (\mu,\sigma) \in \RR^p \times \TT_p\}$ where each $\Pi_{\mu, \sigma}$ is a probability measure on the space of probability measures on $\RR^p$, will be called a location-scale family if there is a random probability measure $F^\star$ on $\RR^p$ such that, for any $(\mu,\sigma)$, the law of the random measure $F_{\mu,\sigma}^\star$ defined as $dF_{\mu,\sigma}^\star(x) = |\det\sigma|^{-1} \,dF^\star(\sigma^{-1}(x-\mu))$ is precisely $\Pi_{\mu,\sigma}$.  A location-scale family will be called rotation-invariant if the random measures $F_{0,\eta'}^\star$ and $F^\star$ have the same law for any orthogonal matrix $\eta$.  Also, we shall call a location-scale family absolutely continuous if the characterizing $F^\star$ is absolutely continuous with respect to the Lebesgue measure with probability~1.  

\begin{theorem}
\label{thm:bpv}
Let $F \sim \Pi = \int \Pi_{\mu,\sigma} \,d\pi_L(\mu,\sigma)$ be a random probability measure on $\RR^p$, where $\{\Pi_{\mu,\sigma}: (\mu,\sigma) \in \RR^p \times \TT_p\}$ is an absolutely continuous, rotation-invariant, location-scale family, and $\pi_L$ is the left Haar measure on $\RR^p \times \TT_p$.  Then, for any $x_1,\ldots,x_{p+1}$ such that $\{\tilde x_j = x_j - x_{p+1} \in \RR^p: j =1,\ldots,p\}$ are linearly independent, 
\begin{equation}
\label{eq:pred}
m_{\Pi,p+1}(x_1,\ldots,x_{p+1}) = c_p^{-1} |\det \tilde x|^{-p}, 
\end{equation}
where $\tilde x$ is the $p \times p$ matrix with columns $\tilde x_1,\ldots,\tilde x_p$, and $c_p = 2^p \pi^{p^2/2} / \Gamma_p(p/2)$.  
\end{theorem}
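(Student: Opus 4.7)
The plan is to exploit the invariance of the integral defining $m_{\Pi,p+1}$ under the group action implicit in the location-scale structure, combined with the rotation invariance of $F^\star$. Since $\pi_H$ is right Haar on $\RR^p \times \LL_p$, the marginal should be invariant (up to explicit Jacobians) under translations and lower-triangular scalings of the data, and the rotation invariance of $F^\star$ should enlarge this to the full general-linear group, ultimately forcing $m_{\Pi,p+1}$ to depend on the data only through the single invariant $|\det\tilde x|$.

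Concretely, I would first rewrite $m_{\Pi,p+1}(x_1,\ldots,x_{p+1}) = \int_{\RR^p \times \LL_p} |\det\Lambda|^{-(p+1)}\, \phi\bigl(\Lambda^{-1}(x_1-\mu),\ldots,\Lambda^{-1}(x_{p+1}-\mu)\bigr)\,d\pi_H(\mu,\Lambda)$, where $\phi(z_1,\ldots,z_{p+1}) = \E[\prod_i f^\star(z_i)]$ is the expected product density of $p+1$ iid draws from the generating random measure $F^\star$. The rotation-invariance hypothesis on the family transfers to $\phi$: by definition $f^\star(A\,\cdot)$ has the same law as $f^\star$ for any orthogonal $A$, so $\phi$ is invariant under a simultaneous orthogonal transformation of its arguments. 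A translation in $\mu$ then lets me assume $x_{p+1}=0$, reducing the task to computing $m_{\Pi,p+1}(\tilde x_1,\ldots,\tilde x_p,0)$.

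For the scale reduction, the right-Haar property of $\pi_H$ combined with the Cholesky Jacobian for $\Lambda\mapsto\Lambda_0\Lambda$ on $\LL_p$ yields a transformation law $m_{\Pi,p+1}(\Lambda_0 x_{1:p+1}) = J(\Lambda_0)\,m_{\Pi,p+1}(x_{1:p+1})$ for any $\Lambda_0\in\LL_p$, with $J(\Lambda_0)$ an explicit monomial in the diagonal entries of $\Lambda_0$. Taking the LQ decomposition $\tilde x = LQ$ with $L\in\LL_p$ (positive diagonal) and $Q$ orthogonal, and applying this transformation with $\Lambda_0 = L^{-1}$, I would map $(\tilde x_1,\ldots,\tilde x_p,0)$ to $(Q_1,\ldots,Q_p,0)$, where $Q_1,\ldots,Q_p$ are the columns of $Q$, leaving an integral whose value depends only on $Q$ multiplied by an explicit factor in $L$.

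The step I expect to be the main obstacle is showing that the remaining integral is independent of $Q$ and that the accumulated factor in $L$ collapses to exactly $|\det L|^{-p} = |\det\tilde x|^{-p}$. This is where the rotation invariance of $\phi$ is essential: inside the integral I would perform a combined substitution $\mu\mapsto Q^{-1}\mu$ together with the LQ-based adjustment $Q^{-1}\Lambda = \tilde L B$ with $\tilde L\in\LL_p$ and $B$ orthogonal, so that, after invoking $\phi(B z_i) = \phi(z_i)$, the integrand is re-expressed in the canonical configuration $(e_1,\ldots,e_p,0)$. Carefully tracking the combined Cholesky, Haar, and LQ Jacobians through this substitution is the main technical labour, but the expected outcome is that the asymmetric monomial $J$ combines with the rotation Jacobian to yield precisely $|\det L|^{-p}$, giving $m_{\Pi,p+1}(x_{1:p+1}) = C\,|\det\tilde x|^{-p}$ with $C$ independent of $\tilde x$ and of the particular rotation-invariant family. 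The constant is then pinned down by evaluating $m_{\Pi,p+1}$ at $x_j = e_j$, $x_{p+1}=0$ for any convenient family --- most transparently the degenerate case with $F^\star = \nm(0,I_p)$, where the integration reduces to a standard normal--Wishart calculation yielding $c_p = 2^p\pi^{p^2/2}/\Gamma_p(p/2)$.
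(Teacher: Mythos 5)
Your plan is, at bottom, the same group--theoretic argument as the paper's, reorganized to act on the data rather than on the parameters, but the one step that carries essentially all of the mathematical content is deferred to an ``expected outcome,'' and that is where the proposal has a genuine gap. Concretely: the scale--reduction law $m_{\Pi,p+1}(\Lambda_0 x_{1:p+1}) = J(\Lambda_0)\, m_{\Pi,p+1}(x_{1:p+1})$ for $\Lambda_0 \in \LL_p$ follows from the right--Haar structure alone, and the multiplier it produces is a priori an \emph{asymmetric} monomial $\prod_j (\Lambda_0)_{jj}^{e_j}$ whose exponents need not all equal $-p$ (only their sum is forced to be $-p^2$). If the $e_j$ are not all equal, your LQ reduction $\tilde x = LQ$ with $\Lambda_0 = L^{-1}$ cannot terminate in $|\det L|^{-p}$ times a $Q$-only quantity, because the reduced integral $m_{\Pi,p+1}(Q_{\cdot 1},\ldots,Q_{\cdot p},0)$ has no way to carry compensating $L$-dependence. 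So the whole theorem hinges on showing that, once rotation invariance is folded in, the triangular scaling upgrades to full general--linear equivariance $m_{\Pi,p+1}(a+Sx_{1:p+1}) = |\det S|^{-p} m_{\Pi,p+1}(x_{1:p+1})$. Your sketched substitution ($\mu \mapsto Q^{-1}\mu$ together with the orthogonal--triangular splitting $Q^{-1}\Lambda = \tilde L B$ and $\phi(Bz)=\phi(z)$) is an attempt at exactly this, but it is precisely the hard Jacobian identity --- how Lebesgue measure on the nonsingular matrices factors into Haar measure on $\OO_p$ times a weighted measure on $\LL_p$, and how that weight interacts with the right--Haar exponents --- and it is not carried out. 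Asserting that ``the asymmetric monomial $J$ combines with the rotation Jacobian to yield precisely $|\det L|^{-p}$'' is to assert the theorem.

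For comparison, the paper closes this step head--on and in the parameter space: after conditioning on $f^\star$ by Fubini and substituting $\tau = \Lambda^{-1}$, it uses rotation invariance to average the integrand over a Haar--distributed $\eta \in \OO_p$, combines $\nu = \eta\tau$ into a single integral over all nonsingular matrices via the decomposition $(d\nu) \propto (d\tau)(\eta' d\eta)$, and then makes the one substitution $z_i = \nu(x_i - \mu)$, which is a bijection onto $(\RR^p)^{p+1}$. That substitution simultaneously produces the factor $|\det\tilde x|^{-p}$, factorizes the integrand into $\prod_i \int f^\star(z_i)\,dz_i = 1$, and delivers the constant $c_p^{-1}$ as the reciprocal volume of $\OO_p$ --- so no separate normalization computation is needed. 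Your idea of pinning the constant by evaluating the degenerate family $F^\star = \nm(0,I_p)$ at a canonical configuration is legitimate once the $|\det\tilde x|^{-p}$ dependence has been established, but it is that dependence, not the constant, that your proposal leaves open.
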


In particular, for $p=1$, the minimum sample size is $\nmin=2$ and such a sample consists of two distinct observations, say, $x_1$ and $x_2$.  Then $\tilde x$ is a scalar, namely $x_1-x_2$, and $|\det \tilde x| = |x_1-x_2|$.  Also, a direct calculation gives $c_1 = 2$.  Therefore, the predictive density for $x_{1:2}$ is simply $\{2|x_1-x_2|\}^{-1}$ which is exactly the result given in \citet[][page 309]{berger1998bayes}.  

\citet{berger2001bayesian} argue that, when $p=1$, the conditions of Theorem~\ref{thm:bpv} are satisfied by their P\'olya tree models.  Here we argue that $\{\dpm_{\mu,\sigma}(\alpha,\Psi): (\mu,\sigma) \in \RR^p \times \TT_p\}$ does too, for any $p \geq 1$.  Indeed, it follows immediately from the definition \eqref{eq:dpm} that $\dpm_{\mu,\sigma}(\alpha,\Psi)$ is a location-scale family characterized by the random measure $F^\star \sim \dpm_{0,I}(\alpha,\Psi)$.  Also $F^\star$ is absolutely continuous with respect to the Lebesgue measure because each normal component is so.  Lemma~\ref{lem:rotation} in Appendix~\ref{apndx:proofs} shows that $\dpm_{\mu,\sigma}(\alpha,\Psi)$ is rotation-invariant as well.  Therefore, the result of Theorem~\ref{thm:bpv} holds for the proposed Dirichlet process mixture alternative.  

The null normal model \eqref{eq:null} may be characterized by $\{\langle F_{\mu,\sigma} \rangle: (\mu,\sigma) \in \RR^p \times \TT_p\}$, where $\langle F \rangle$ denotes a degenerate distribution at $F$.  Clearly, this null model is also an absolutely continuous, rotation-invariant, location-scale family, so Theorem~\ref{thm:bpv} applies to the null model too.  Putting these results together leads to the following predictive matching property.  

\begin{theorem}
\label{thm:pred.match}
The two models \eqref{eq:null} and \eqref{eq:alt}, with $\pi_0=\pi_1=\pi_L$, produce the same predictive distribution for any hypothetical sample of size $\nmin = p+1$.  
\end{theorem}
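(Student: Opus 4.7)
The plan is to derive Theorem~\ref{thm:pred.match} as an immediate corollary of Theorem~\ref{thm:bpv}, applied once to the null family and once to the alternative. Since the conclusion is an equality of two marginal distributions, it suffices to verify that both families satisfy the hypotheses of Theorem~\ref{thm:bpv}; the common explicit formula $c_p^{-1}|\det \tilde x|^{-p}$ then delivers the matching automatically.

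For the null, $\{\langle F_{\mu,\Lambda}\rangle : (\mu,\Lambda) \in \RR^p\times\LL_p\}$ is a degenerate location-scale family with characterizing random measure $F^\star = \langle \nm(0,I_p)\rangle$; absolute continuity is immediate, and rotation-invariance reduces to the familiar invariance of $\nm(0,I_p)$ under orthogonal transformations, since if $Y \sim \nm(0,I_p)$ then $A'Y \sim \nm(0,A'A) = \nm(0,I_p)$ for any orthogonal $A$. For the alternative, the hypotheses have already been checked in the paragraph preceding the theorem: $\dpm_{\mu,\Lambda}(\alpha,\Psi)$ is a location-scale family with characterizing measure $F^\star \sim \dpm_{0,I_p}(\alpha,\Psi)$ by~\eqref{eq:dpm}, each mixture component in~\eqref{eq:local.draw} is absolutely continuous, and rotation-invariance is the content of Lemma~\ref{lem:rotation}. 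Hence Theorem~\ref{thm:bpv} applies to both families.

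Combining the two applications, for every $(x_1,\ldots,x_{p+1})$ with $\tilde x_1,\ldots,\tilde x_p$ linearly independent both marginal densities equal $c_p^{-1}|\det \tilde x|^{-p}$, and the set where linear independence fails is the zero set of a nontrivial polynomial in $\RR^{p(p+1)}$ and thus has Lebesgue measure zero. Because both predictive measures are absolutely continuous with respect to Lebesgue measure, the almost-everywhere equality of their densities promotes to equality of the measures themselves, which is the desired predictive matching. I do not foresee a substantive obstacle; the real content of the result lives in Theorem~\ref{thm:bpv} together with Lemma~\ref{lem:rotation}, and Theorem~\ref{thm:pred.match} only repackages them.
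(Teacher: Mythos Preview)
Your proposal is correct and follows essentially the same route as the paper: verify that both the null family $\{\langle F_{\mu,\Lambda}\rangle\}$ and the alternative family $\{\dpm_{\mu,\Lambda}(\alpha,\Psi)\}$ satisfy the hypotheses of Theorem~\ref{thm:bpv} (absolutely continuous, rotation-invariant, location-scale), then read off the common predictive density $c_p^{-1}|\det\tilde x|^{-p}$. Your added remark that the exceptional set where $\tilde x_1,\ldots,\tilde x_p$ are linearly dependent has Lebesgue measure zero is a nice touch the paper leaves implicit.
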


\subsection{Precision parameter and Bayes factor reporting}
\label{SS:alpha}

 With $(\omega_1,\omega_2)$ chosen as in \eqref{eq:omega}, our alternative model and the Bayes factor depend only on the specification of the scalar precision parameter $\alpha$. As discussed in Section \ref{SS:omega}, different values of $\alpha$ allows different amounts and modes of variation of the alternative from the null; see also Figure \ref{fig:omegas}.  Following \citet{berger2001bayesian} we recommend computing the Bayes factor for a range of $\alpha$ values, and presenting them side by side in the form of a plot.  In our examples, we consider a range of $\alpha$ values comparable to that suggested by \citet{escobar1994estimating}. From this plot, the user is free to choose his or her favorite summary of evidence against the null. Various scalar summaries of evidence against $H_0$ can be obtained from this plot. A particularly interesting summary is the minimum Bayes factor. \citet{berger2001bayesian} comment: 
 \begin{quote}
 If this minimum is not small, then there is no reason to doubt $H_0$. Of course, even if this minimum is small, $H_0$ should not be summarily rejected, because the minimum is achieved by searching for the most favorable prior for $H_1$, for the given data, which clearly results in a bias against $H_0$, but, at least, it is useful to know that there are alternatives that better explain the data.
\end{quote}
Some sort of average of the Bayes factors could also be considered. In particular, a weighted harmonic mean of the Bayes factors, weighted according to some probability density $\pi_\alpha$ on $\alpha$, gives the overall Bayes factor with respect to the composite alternative that combines the $\alpha$-indexed family of alternative models through the prior specification $\alpha \sim \pi_\alpha$. However, a non-subjective choice of $\pi_\alpha$ remains an open question \citep[][]{dorazio2009selecting}. Note also that the minimum Bayes factor may be interpreted as the ``empirical Bayes'' Bayes factor because it corresponds to the Type II maximum likelihood estimate of $\alpha$.

\section{Bayes factor computation}
\label{S:computation}

\h{Recall that the Bayes factor $B$ in \eqref{eq:bayes.factor} can be written as the ratio of the marginal likelihood under the null model to that under the alternative.  With $\pi_0(\mu, \sigma) = \pi_L(\mu, \sigma) = \prod_{j=1}^p \sigma_{jj}^{-j}$ one has $\pi_L(\mu, \Sigma) = 2^{-p} \det(\Sigma)^{-(p+1)/2}$. Hence, the marginal data likelihood under the null model, $f_{H_0}(x_{1:n}) = \int \prod_{i=1}^n \nm(x_i \mid \mu, \Sigma) \pi_L(\mu, \Sigma) \, d\mu \, d\Sigma$, equals:
\[
f_{H_0}(x_{1:n}) = \frac{\Gamma_p(\frac{n-1}2)}{2^p n^{p/2} \pi^{p(n-1)/2}\det\{(n-1)S\}^{(n-1)/2}}
\]
where $S$ is the sample variance matrix: $S = (n-1)^{-1} \sum_{i=1}^n (x_i - \bar x)(x_i - \bar x)^\top$.} 
But the denominator of $B$ in \eqref{eq:bayes.factor}, which can be written as  
\begin{equation}
f_{H_1}(x_{1:n}) = \int_{\RR^p \times \TT_p} \int \bigl\{\prod_{i=1}^n dF(x_i) \bigr\} \,d\dpm_{\mu,\sigma}(F \mid \alpha,\Psi) \,d\pi_L(\mu,\sigma),
\label{eq:m1}
\end{equation}
does not yield much analytical simplification and has to be computed by numerical methods. Numerical approximation to marginal likelihoods remains one of the biggest challenges in Bayesian statistics \citep[e.g.,][]{kass1995bayes}, particularly for nonparametric models. For Dirichlet process mixture models, \citet{liu1996nonparametric} presents an efficient sequential imputation algorithm to compute the inner integral in \eqref{eq:m1}. \citet{basu2003marginal} embed this algorithm within the likelihood-posterior ordinate recipe of \citet{chib1995marginal} to approximate \eqref{eq:m1}. We pursue a different adaptation of Liu's algorithm, where we deal with the outer integration in \eqref{eq:m1} by importance sampling and show that it leads to  a quicker and more efficient approximation than the ordinate approach.

\subsection{Importance sampling with sequential imputation}
\label{SS:importance}
Due to the stick-breaking representation \eqref{eq:stick.break}, the alternative model \eqref{eq:alt} on $X_{1:n}$ equals
\begin{equation}
\label{eq:data.conditional}
X_{1:n} \mid \{S_{1:n},(U,V)_{1:n},\mu,\sigma\} \sim \prod_{i=1}^n \nm(X_i \mid \mu + \sigma U_{S_i}, \sigma V_{S_i} \sigma^\top), 
\end{equation}
where $(\mu,\sigma) \sim \pi_L(\mu,\sigma)$, $(U_i, V_i)$, $1 = 1, \ldots, n$, are independent latent mixing parameters drawn from $\Psi$ and $S_{1:n} = (S_1, \ldots, S_n)$ is a vector of labels tracking latent cluster ties. These three sets  of variables are mutually independent. It suffices to restrict the latent labels to the space $\{(s_1, \ldots, s_n) \in \I_n^n : s_1 = 1, s_{i+1} \leq \max(s_{1:i}) + 1, i\in \I_{n-1}\}$, where $\I_n = \{1,\ldots,n\}$. From the P\'olya urn representation \citep{blackwell1973ferguson} of a Dirichlet process, the distribution of $S_{1:n}$ can be written as 
\[ \prob(S_1 = 1) = 1, \quad \prob(S_{i + 1} = \ell \mid S_{1:i}) = 
\begin{cases}
\frac{k_\ell(i) }{\alpha + i} & \ell =1,\ldots,\Lambda_i \\
\frac{\alpha}{\alpha + i} & \ell = \Lambda_i + 1.
\end{cases} \]
where $\Lambda_i = \max(S_{1:i})$ and $K_\ell(i) = |\{j \leq i: S_j = \ell\}|$. 

It is possible to integrate out $U$ from this description, with suitable changes made to \eqref{eq:data.conditional}.  Write $V=V_{1:n}$ and let $f(x_{1:n}, s_{1:n}, v, \mu,\sigma)$ denote the resulting joint density of $(X_{1:n}, S_{1:n}, V, \mu,\sigma)$ and let $f^X_{i+1}(x_{i+1} \mid x_{1:i}, s_{1:i}, v, \mu, \sigma)$ denote the associated conditional density of $X_{i + 1}$ given $(X_{1:i}, S_{1:i}, V, \mu,\sigma)$. Also let $f^S_{i + 1}(s_{i + 1} \mid x_{1:(i + 1)}, s_{1:i}, v, \mu, \sigma)$ denote the conditional density of $S_{i + 1}$ given $(X_{1:(i + 1)}, S_{1:i}, V, \mu, \sigma)$.  These densities are given by 
\begin{align}
f^X_{i + 1}(x_{i + 1} \mid x_{1:i}, s_{1:i}, v, \mu, \sigma) & = \frac{\alpha}{\alpha + i}  \nm(x_{i + 1} \mid \mu, \sigma\sigma^\top) + \sum_{\ell = 1}^{\lambda_i}\frac{ k_\ell(i)}{\alpha + i} \nm(x_{i + 1} \mid \mu_\ell, \sigma_\ell\sigma_\ell^\top), \label{eq:partial.conditional.1} \\
f^S_{i + 1}(\ell \mid x_{1:(i + 1)}, s_{1:i}, v, \mu, \sigma) & = 
\begin{cases}
c^{-1} k_\ell(i)\nm(x_{i + 1} \mid \mu_\ell, \sigma_\ell\sigma_\ell^\top), & \ell =1,\ldots,\lambda_i\\
c^{-1} \alpha \nm(x_{i + 1} \mid \mu, \sigma\sigma^\top), & \ell = \lambda_i + 1,
\end{cases} \label{eq:partial.conditional.2}
\end{align}
with $\lambda_i = \max(s_{1:i})$, $k_\ell(i) = |\{j \leq i: s_j = \ell\}|$, 
\begin{equation}
\label{eq:comp.par}
\begin{split}
\mu_\ell & = \mu + \sigma (I_p - v_\ell) \bigl\{v_\ell + k_\ell(i)(I_p - v_\ell)\bigr\}^{-1} \textstyle\sum_{j = 1}^i (x_j - \mu) 1(s_j = \ell), \\
\sigma_\ell\sigma_\ell^\top & = \sigma v_\ell \bigl\{v_\ell + k_\ell(i)(I_p - v_\ell)\bigr\}^{-1} \bigl\{I_p + k_\ell(i)(I_p - v_\ell)\bigr\}\sigma^\top, 
\end{split}
\end{equation}
 and $c = \alpha \nm(x_{i + 1} \mid \mu, \sigma\sigma^\top) + \sum_{\ell = 1}^{\lambda_i} k_\ell(i)\nm(x_{i + 1} \mid \mu_\ell, \sigma_\ell\sigma_\ell^\top)$.

The marginal likelihood $f_{H_1}(x_{1:n})$ can be calculated by integrating $f(x_{1:n}, s_{1:n}, v,\mu,\sigma)$ with respect to $(s_{1:n}, v,\mu,\sigma)$.  This integral is intractable, but can be approximated by importance sampling Monte Carlo \citep[][Chap.~2.5]{liu2001monte}. Let $(S_{1:n}^m, V^m, \mu^m,\sigma^m)$, $m =1,\ldots,M$, be independent draws from a joint density $f_\imp(s_{1:n}, v,\mu,\sigma)$ on the space of $(S_{1:n}, V,\mu,\sigma)$. Then an unbiased, root-$M$ consistent estimate $f_{H_1}(x_{1:n})$ is 
\h{
\begin{equation}
\label{eq:is.est}
\hat f_{H_1}(x_{1:n}) = \frac1M \sum_{m=1}^M w_m,
\end{equation}
where
\begin{equation}
\label{eq:imp wt}
w_m = \frac{f(x_{1:n}, S_{1:n}^m, V^m, \mu^m,\sigma^m) }{f_\imp(S_{1:n}^m, V^m, \mu^m,\sigma^m)},~~1 \le m \le M,
\end{equation}
are the importance weights of the drawn samples.} The efficiency of this approximation depends on \h{how small the theoretical variance of $w_m$ is, which, in turn, depends on} how well $f_\imp(s_{1:n}, v, \mu,\sigma)$ approximates the conditional density of $(S_{1:n}, V,\mu,\sigma)$, given $X_{1:n} = x_{1:n}$, under the joint density $f(x_{1:n}, s_{1:n}, v,\mu,\sigma)$; refer to \citet{tokdar2010importance} for an overview importance sampling theory.  Below we present one choice that gives a good approximation.

Let $f_\imp(s_{1:n}, v, \mu, \sigma)$ be the joint density of $(S_{1:n}, V,\mu,\sigma)$ where $(\mu,\sigma)$ has density $f^{\mu,\sigma}_\imp(\mu,\sigma)$ to be specified later, $V=(V_1, \ldots, V_n)$ are independent draws from $\be(\omega_1, \omega_2)$, also drawn independently of $(\mu,\sigma)$, and $S_{1:n}$ given $V = v$ and $(\mu,\sigma)$ has density $\prod_{i = 0}^{n - 1} f_{i + 1}^S(s_{i + 1} \mid x_{1:(i+1)}, s_{1:i}, v, \mu, \sigma)$ as given in \eqref{eq:partial.conditional.2}. This choice can be justified on two accounts.  First, the conditional importance density of $S_{i + 1}$   given $(S_{1:i}, V, \mu,\sigma)$ is the partial conditional density of $S_{i + 1}$ given $(X_{1:(i+1)}, S_{1:i}, V, \mu, \sigma)$ under $f$.  Second, the partial conditional density under $f$ of $V_\ell$ given $\{S_{i + 1} = \max(S_{1:i}) + 1 = \ell, X_{1:(i+1)}, V_{1:(\ell - 1)}, \mu,\sigma\}$ is $\be(\omega_1, \omega_2)$.  Using the sequential imputation calculations of \citet{liu1996nonparametric} and the definition of $f_\imp$, it can be shown that 
\[ f(x_{1:n}, s_{1:n}, v,\mu,\sigma) = f_\imp(s_{1:n}, v, \mu,\sigma)\frac{\pi_L(\mu,\sigma)}{f_\imp^{\mu,\sigma}(\mu,\sigma)} \prod_{i = 0}^{n - 1} f^X_{i + 1}(x_{i + 1} \mid x_{1:i}, s_{1:i}, v, \mu, \sigma). \]
Therefore for each $m \in \{1, \ldots, M\}$, the corresponding importance weight in \eqref{eq:imp wt} can be expressed as
\begin{align}
w_m  & = \frac{\pi_L(\mu^m,\sigma^m)}{f_\imp^{\mu,\sigma}(\mu^m,\sigma^m)} \prod_{i = 0}^{n - 1} f^X_{i + 1}(x_{i + 1} | x_{1:i}, s_{1:i}^m, v^m, \mu^m, \sigma^m) \nonumber\\
& =  \frac{\pi_L(\mu^m,\sigma^m)}{f_\imp^{\mu,\sigma}(\mu^m,\sigma^m)} \prod_{i = 0}^{n - 1} \frac{\alpha\nm(x_{i + 1} \mid \mu^{ m }, \sigma^{ m }\sigma^{ m\top }) +  \sum_{\ell = 1}^{\lambda_i^m} k_\ell^m(i) \nm(x_{i + 1} \mid \mu^{ m }_\ell, \sigma_\ell^{ m }\sigma^{ m\top }_\ell)}{\alpha + i}, \nonumber
\end{align}
with formulas for $\mu^m_\ell$ and $\sigma^m_\ell$ suitably adapted from \eqref{eq:comp.par}; similarly for $\lambda_i^m$ and $k_\ell^m(i)$. \h{In order to obtain more easily programmable formulas of this expression we recommend: (A) standardizing the observations $x_{1:n}$ with respect to mean $\mu^m$ and variance $\sigma^m{\sigma^{m\top}}$, and (B) using a spectral decomposition of $v^m_i = Q^m_i D^m_i {Q^m_i}'$, $i = 1,\ldots, n$.}

\subsection{Choice of importance density on $(\mu, \sigma)$}

In order to make the importance sampling estimate efficient, $f_\imp^{\mu,\sigma}(\mu,\sigma)$ should be chosen to approximate $f_{H_1}(\mu,\sigma \mid x_{1:n})$, the posterior density of $(\mu,\sigma)$ under $H_1$. Due to the embedding and predictive matching properties of the alternative, one may expect the posterior density of $(\mu,\sigma)$ under the alternative to be similar to that under the null. A reasonable and conservative choice is an approximation to $f_{H_0}(\mu,\sigma \mid x_{1:n})$ with heavier tails to guard against a possible mismatch with  $f_{H_1}(\mu,\sigma \mid x_{1:n})$ \citep{berger2001bayesian}.

For any $\nu > p-1$ and any $\Psi \in \SS_p$, let $\wish(\nu, \Psi)$ and $\iwish(\nu, \Psi)$ denote, respectively, the Wishart and the inverse-Wishart distributions with shape $\nu$ and scale $\Psi$. The null posterior density, viewed through the $(\mu, \Sigma)$ parametrization, could be conveniently written as: 
\[
f_{H_0}(\mu, \Sigma \mid x_{1:n}) = \iwish(\Sigma \mid n-1, (n-1)S) \times \nm(\mu \mid \bar x, n^{-1}\Sigma).
\]
We take $f^{\mu,\sigma}_\imp(\mu, \sigma)$ to be a heavy tailed approximation to it, controlled by two scalar parameters $\nu > p-1$, $\rho > 0$, and given by:
\[
f^{\mu,\Sigma}_{\imp}(\mu, \Sigma) = \fdist(\Sigma \mid \nu - (p-1), \nu, S) \times \stt_\nu(\mu \mid \bar x, \rho \Sigma / n)
\]
where $\fdist(\kappa, \delta, \Psi)$ denotes the matrix F distribution \citep{mulder2018matrix} with shapes $\kappa > 0$, $\delta > p-1$ and scale $\Psi \in \SS_p$. Under this importance density one could write $\Sigma \mid \Phi \sim \iwish(\nu, \Phi)$, $\Phi \sim \wish(\nu, S)$. See Appendix~\ref{apndx:matF} for more details on efficient random sampling and probability density evaluation of such a $\Sigma$. In our numerical experiments, we used $\nu = \max\{p+1, n - p\sqrt n\}$ and $\rho = \sqrt{n}$; but the results were not too sensitive to these choices.

\subsection{Comparison with \citet{basu2003marginal}}
\label{SS:basu-chib}
The likelihood-posterior ordinate recipe of \citet{chib1995marginal} approximates $f_{H_1}(x_{1:n})$ by the quantity $\pi_L(\mu^\star, \sigma^\star) f_{H_1}(x_{1:n} |  \mu^\star, \sigma^\star)/ f_{H_1}(\mu^\star, \sigma^\star | x_{1:n})$ where $(\mu^\star, \sigma^\star)$ is any point of high posterior density. To approximate the likelihood ordinate $f_{H_1}(x_{1:n} | \mu^\star, \sigma^\star)$ once a $(\mu^\star, \sigma^\star)$ has been chosen, \citet{basu2003marginal} recommend using the importance sampling scheme on $(S_{1:n}, V)$ described in Section \ref{SS:importance} conditional on $\mu = \mu^\star, \sigma = \sigma^\star$, leading to the following approximation to $B$
\begin{equation}
\widehat B^{-1} = \frac{f_{H_0}(\mu^\star,\sigma^\star \mid x_{1:n})}{f_{H_1}(\mu^\star,\sigma^\star \mid x_{1:n})}\frac{1}{M} \sum_{m = 1}^M  \prod_{i = 0}^{n - 1} \left\{ \frac\alpha{\alpha + i} +  \sum_{\ell = 1}^{\lambda_i^m} \frac{k_\ell^m(i)}{\alpha + i} \frac{\nm(x_{i + 1} \mid \mu^{ m }_\ell, \sigma_\ell^{ m }\sigma^{ m\top }_\ell)}{\nm(x_{i + 1} \mid \mu^{ \star }, \sigma^{ \star }\sigma^{ \star\top })}\right\}
\label{eq:plo}
\end{equation}
\citet{basu2003marginal} recommend identifying $(\mu^\star, \sigma^\star)$ by running an initial Markov chain sampler, preferably a Gibbs sampler which can also provide a Rao--Blackwellized Monte Carlo approximation to the posterior ordinate $f_{H_1}(\mu^\star, \sigma^\star | x_{1:n})$. Alternatively one could gather posterior samples of $(\mu, \sigma)$ and use efficient smoothing based density estimation techniques to approximate $f_{H_1}(\mu^\star, \sigma^\star | x_{1:n})$. We follow both suggestions to construct two competitors of our importance sampling algorithm for the univariate case.

\begin{table}
\centering
\begin{tabular}{p{1.5in}rrrrrrr}
Algorithm & Mean & Min. & 1st Q. & Median & 3rd Q. & Max& Time\\
\hline
Basu--Chib & 1.66 & 0.0005 & 0.86 & 1.61 & 2.32 & 7.7 & 3.6s\\
Basu--Chib + smoothing & 1.82  & 1.39 & 1.68  & 1.79  & 1.92  & 2.95  & 3.7s\\
Importance sampling & 1.82 & 1.54 & 1.76 & 1.81 & 1.87 & 2.03 & 0.8s\\[5pt]
\end{tabular}
\caption{Comparison of our importance sampling method against two versions of the \citet{basu2003marginal} algorithm; see text for more details. Columns 2 through 7 give summaries of 100 replications of the Bayes factor computation on the same data set of 100 draws from the standard normal density. Last column refers to run time in seconds per computation.}
\label{tab:basu-chib}
\end{table}

Table \ref{tab:basu-chib} gives summaries of 100 replications of the Bayes factor computation on a single synthetic data set we simulated with 100 draws from the standard normal density. ``Basu--Chib'' refers to Monte Carlo posterior ordinate approximation based on a Gibbs sampler, which is fairly straightforward to design for our choice of Dirichlet process mixture \citep[see e.g.,][for a basic construction]{escobar1995bayesian}. ``Basu--Chib + smoothing'' refers to posterior ordinate approximation based on kernel smoothing of the Gibbs sampler draws of $(\mu, \sigma)$. Smoothing was done by the \texttt{kde} function of the \texttt{R}-package \texttt{ks}, with bandwidth chosen by the plug-in method of \citet{wand1994multivariate}. We also tried the more computationally expensive cross-validation choice of the bandwidth \citep{duong2005cross} which did not result in any appreciable improvement in performance (not reported). ``Importance sampling'' refers to our approach. Each algorithm was run with 10,000 importance samples. ``Basu--Chib'' algorithm required two additional runs of the Gibbs sampler, one to identify $\mu^\star$, $\sigma^\star$ as median draws and the other to approximate the posterior ordinate. ``Basu--Chib + smoothing'' requires only one run of the Gibbs sampler to simultaneously identify $\mu^\star, \sigma^\star$ and gather posterior draws of $\mu, \sigma$ to be used in smoothing. All runs of Gibbs sampler were 10,000 iterations each. 

Table \ref{tab:basu-chib} makes it clear that our importance sampling approach offers a more efficient estimation of the Bayes factor with substantially lower computing cost than either Basu--Chib algorithm. The posterior ordinate approximation step appears suspect for the poor performance of the latter. Smoothing helps, but not to the extent to make the likelihood-posterior ordinate method competitive against our importance sampling algorithm.

\subsection{Additional considerations for multivariate data}
\h{A weakness of the importance sampling scheme described above is that the sampling of the atoms $V_{1:n}$ does not incorporate any information from the data. Each time an observation is assigned to a new cluster $\ell$, the cluster's variance component $V_\ell$ is sampled from the prior and never updated. This could be particularly troublesome in higher dimensions where the chances of randomly landing on an appropriate $V_\ell$ for each new cluster are very slim. As a possible mitigation of this sampling inefficiency, we propose a Rao-Blackwellization extension inspired by sequential Monte Carlo ideas where each $V_\ell$ is represented by a set of particles $V^\star_{\ell r}$, $r = 1, \ldots,R$, whose weights are updated every time a new observation is added to the $\ell$-th cluster.}

To be more precise, notice that the sampling of $V_{1:n}$ under the alternative prior could be represented as: for each $i=1,\ldots,n$, sample $V^\star_{ir} \sim \be(\omega_1, \omega_2)$, $r = 1, \ldots, R$, independently of each other, and then set $V_i$ to be one of the $V^\star_{ir}$ chosen at random. With $V^\star = V^\star_{1:n, 1:R}$, one can then integrate $V_{1:n}$ from the model and rewrite \eqref{eq:partial.conditional.1} and \eqref{eq:partial.conditional.2} as
\begin{align}
f^X_{i + 1}(x_{i + 1} \mid x_{1:i}, s_{1:i}, v^\star, \mu, \sigma) & = \frac{\alpha}{\alpha + i} \nm(x_{i + 1} \mid \mu, \sigma\sigma^\top)\nonumber\\
& \quad\quad\quad\quad + \sum_{\ell = 1}^{\lambda_i}\frac{ k_\ell(i)}{\alpha + i}\sum_{r=1}^R q_{\ell r}(i) \nm(x_{i + 1} \mid \mu_{\ell r}, \sigma_{\ell r}\sigma_{\ell r}^\top), \label{eq:partial.conditional.1R} \\
f^S_{i + 1}(\ell \mid x_{1:(i + 1)}, s_{1:i}, v^\star, \mu, \sigma) & = 
\begin{cases}
c^{-1} k_\ell(i) \sum_{r = 1}^R q_{\ell r}(i)\nm(x_{i + 1} \mid \mu_{\ell r}, \sigma_{\ell r}\sigma_{\ell r}^\top), & \ell =1,\ldots,\lambda_i\\
c^{-1} \alpha \nm(x_{i + 1} \mid \mu, \sigma\sigma^\top), & \ell = \lambda_i + 1,
\end{cases} \label{eq:partial.conditional.2R}
\end{align}
where $\mu_{\ell r}$ and $\sigma_{\ell r}$ are computed as in \eqref{eq:comp.par} but with $v^\star_{\ell r}$ instead of $v_\ell$, and, $q_{\ell r}(i) = m_{\ell r}(i)/ \{m_{\ell, 1}(i) + \cdots + m_{\ell, R}(i)\}$, $r = 1, \ldots, R$, with
\[
m_{\ell r}(i) =\frac{\exp[-k_\ell(i) {\rm tr}\{S_{\ell}(i) (\sigma v^\star_{\ell r}\sigma^\top)^{-1}\}]}{ \det(v^\star_{\ell r})^{\frac{k_\ell(i)-1}{2}} } \nm\left(\bar x_\ell(i) \mid \mu, \sigma \left\{{v^\star_{\ell r}}/{k_\ell(i)} + I_p - v^\star_{\ell r}\right\}\sigma^\top \right),
\]
where $\bar x_\ell(i) = k_\ell(i)^{-1}\sum_{j \le i} x_j I(s_j = \ell)$ is the current cluster mean and $S_\ell(i) = k_\ell(i)^{-1} \sum_{j \le i} (x_j - \bar x_\ell(i))(x_j - \bar x_\ell(i))^\top I(s_j = \ell)$ is the current cluster variance when only the first $i$ observations have been processed.

\h{
The corresponding importance sampling density for $(S_{1:n}, V^\star, \mu, \sigma)$ is defined analogously with $V$ replaced by $V^\star$. The calculation of the importance weights is modified accordingly. Maintaining and updating the relative weights of the $R$ particles for each cluster parameter $V_\ell$ offers a greater incorporation of the observed data. We expect a larger $R$ to be needed for higher dimension, as the space of $V$ matrices is $p(p+1)/2$ dimensional. We suggest a default choice of $R = p(p+1)$, although a thorough investigation of this choice is beyond the scope of this paper. The numerical experiments reported in the next section were carried out with this default choice.
}

\section{Case studies}
\label{S:illustrations}

\begin{example}
\label{ex:kevlar}
\citet{berger2001bayesian} illustrate their Polya tree test on the log-lifetime measurements of 100 Kevlar pressure vessels \citep[][p.~183]{andrews1985data}.  Our alternative model produces a minimum Bayes factor close to $10^{-5}$ for $\alpha \in [2^{-6},2^{13}]$, showing negligible evidence toward normality. We used 20,000 importance samples to compute $\hat B$. Our minimum Bayes factor is similar in magnitude to the one reported by \citet{berger2001bayesian}.  
\end{example}

\begin{example}
\label{ex:multivariate.sims}
Figure~\ref{fig:mv.sims} shows scatterplots of three synthetic datasets of size $n = 100$ and dimension $p = 2$ simulated respectively from a bivariate standard normal, a bivariate standard Student-t with three degrees of freedom, \h{and a Frank copula distribution (parameter = 20 which corresponds to Kendall's $\tau=0.816$) with standard normal marginals. For each dataset, the Bayes factor was calculated over a regular grid of $\alpha \in [2^{-6}, 2^{13}]$ with 10,000 importance samples. Each calculation was replicated 8 independent times to assess reproducibility. We report in Figure \ref{fig:mv.sims} the median, minimum and maximum of these 8 log Bayes factor evaluations, along with a combined estimate obtained by pooling the $8\times10,000$ importance samples together. The graphs indeed suggest that these calculations were fairly reproducible.}

\begin{figure}[!t]
\begin{center}
\scalebox{0.4}{\includegraphics{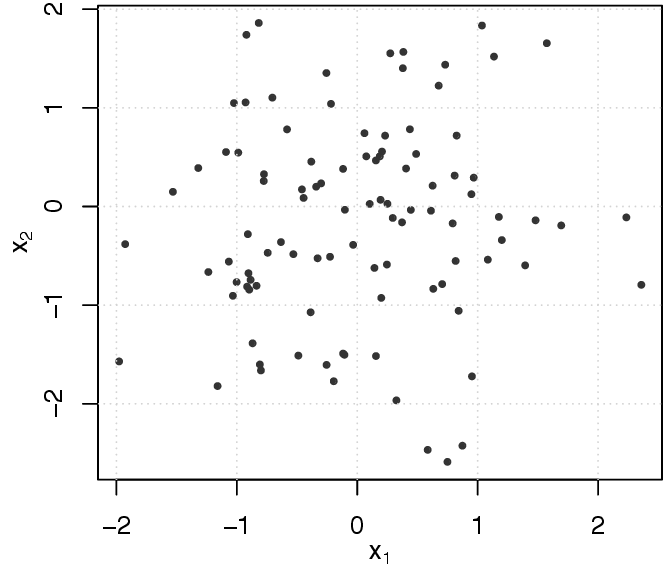}}
\scalebox{0.4}{\includegraphics{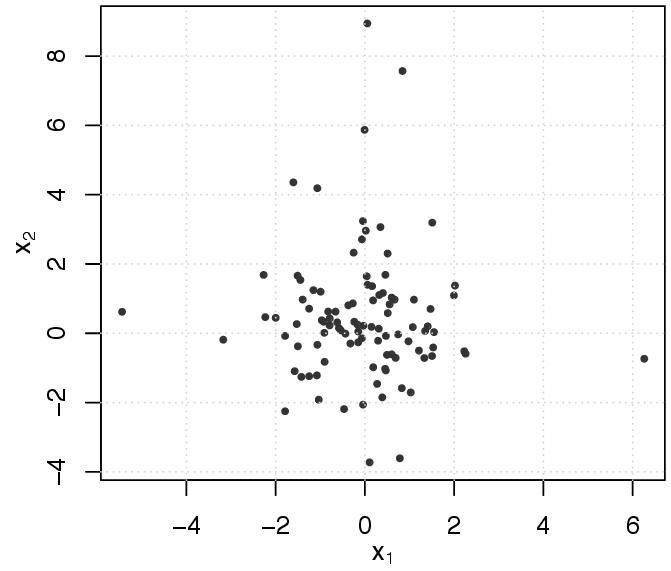}}
\scalebox{0.4}{\includegraphics{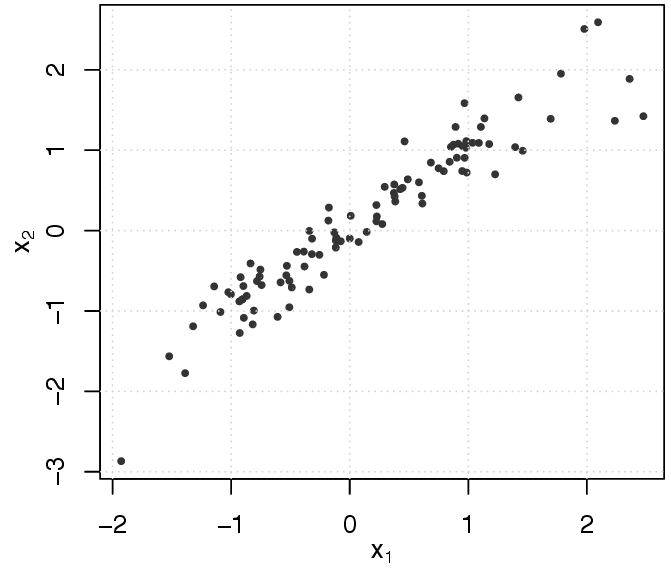}}\\
\scalebox{0.4}{\includegraphics{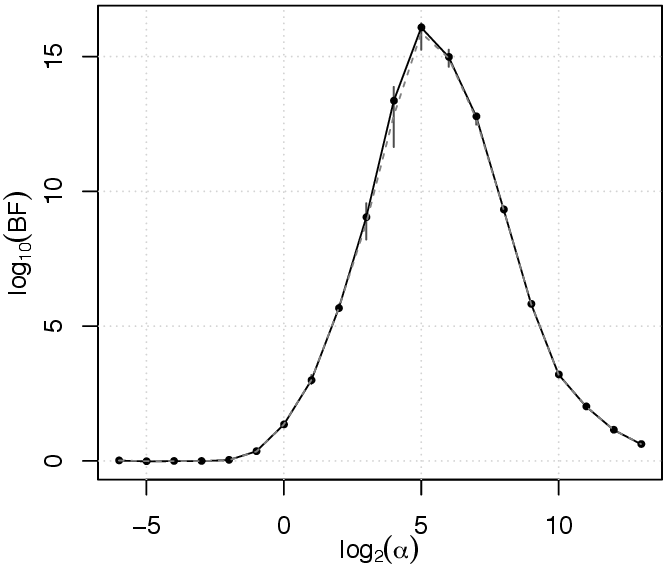}}
\scalebox{0.4}{\includegraphics{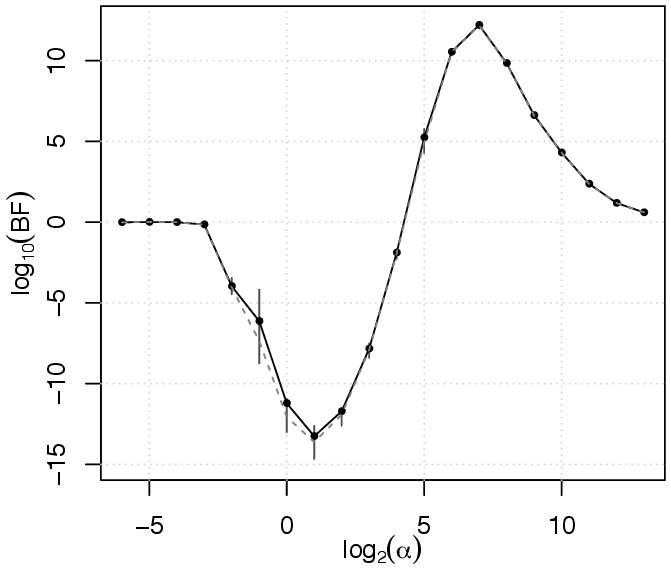}}
\scalebox{0.4}{\includegraphics{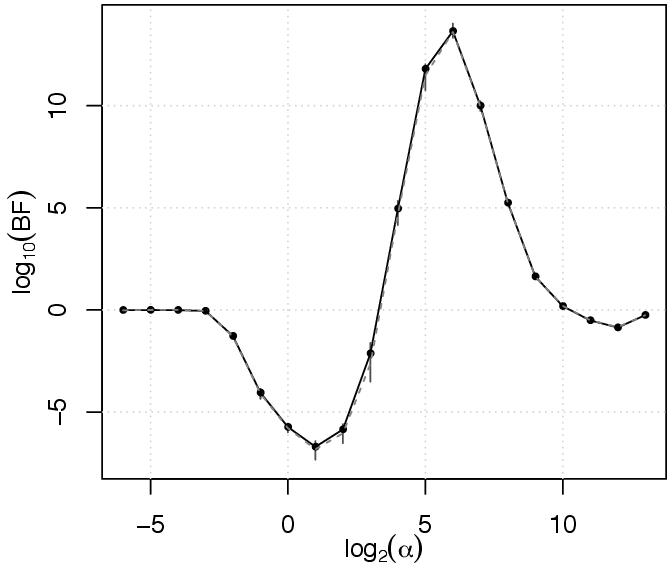}}
\end{center}
\caption{\h{Bayes factors (top) for data (bottom) in Example~\ref{ex:multivariate.sims}; normal (left), Student-t (middle), Frank copula with Gaussian marginals (right). For each dataset, Bayes factors were calculated for precision parameter $\alpha =2^{k}$ with an integer $k$ running from $-6$ through 13. Each Bayes factor calculation was repeated 8 times with 10,000 importance samples each. The median, minimum and maximum Bayes factor values (in base 10 logarithm) are shown as a vertical segment superimposed with a filled circle. The dashed line shows a combined estimate of the Bayes factor by pooling together all the $8\times10,000$ importance sample draws.}}
\label{fig:mv.sims}
\end{figure}

\h{For the first dataset the Bayes factor remains essentially larger than 1 for all precision parameter values} and becomes quite large for moderate $\alpha$, indicating little doubt against normality. The Student-t dataset shows a concentration of points around $(0,0)$ along with a number of outliers, suggesting a heavier-than-normal tail.  The Bayes factor bottoms out \h{around $10^{-12}$} with fairly small values in the range $0.5 \le \alpha \le 8$ where the prior encourages a moderate number of clusters. Presumably, the alternative is able to pick up the heavy tail by assigning the large outliers into separate clusters. For the copula dataset with a non-elliptical scatter, the Bayes factor \h{achieves a minimum of $10^{-7}$} suggesting strong evidence against normality. Interestingly, the Bayes factor dips below 1 across two distinct segments of $\alpha$ values: $\alpha \le 8$ and $\alpha \ge 2^{11}$. The latter segment corresponds to, {\it a priori}, a large number of mixture components where each component has a tiny volume share (refer to Figure \ref{fig:omegas}).  
\end{example}

\begin{figure}[!tp]
\begin{center}
\subfigure[Data scatter and histograms]{\scalebox{0.6}{\includegraphics{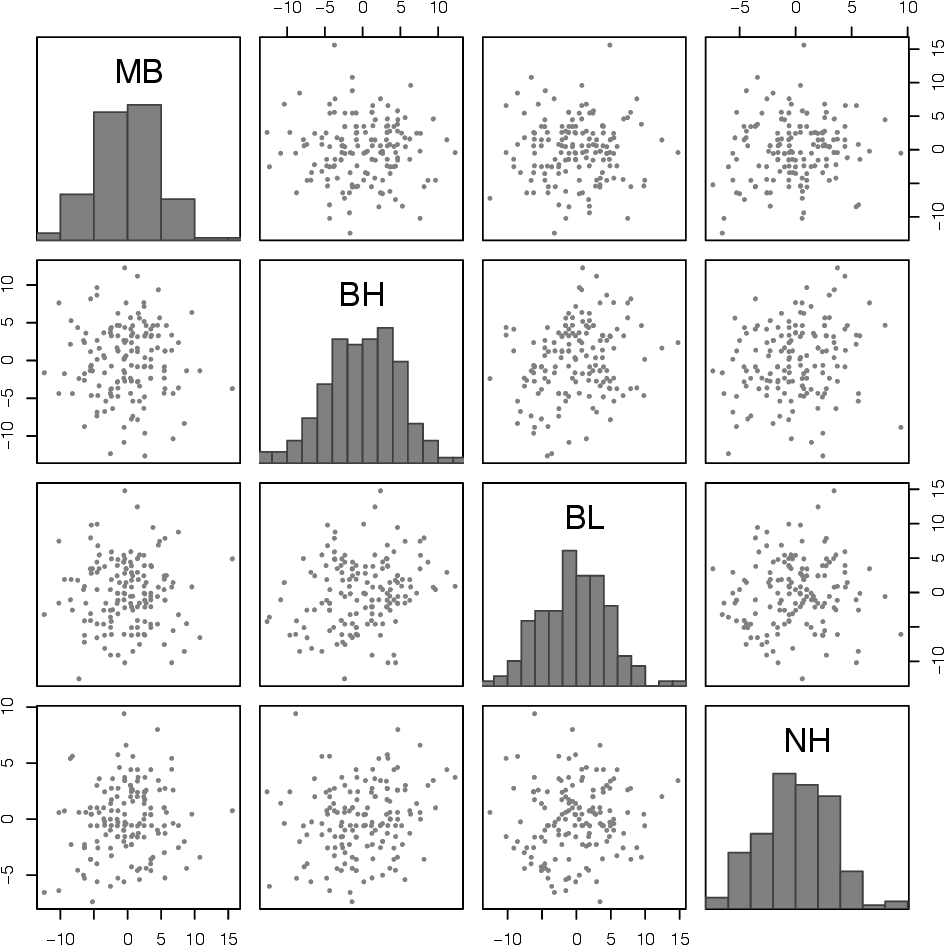}}}
\subfigure[Individual tests of normality]{\scalebox{0.4}{\includegraphics{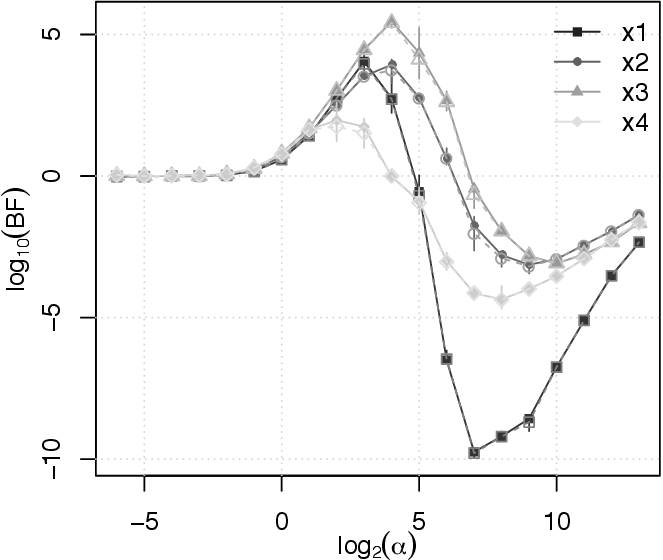}}}
\subfigure[Joint test of normality]{\scalebox{0.4}{\includegraphics{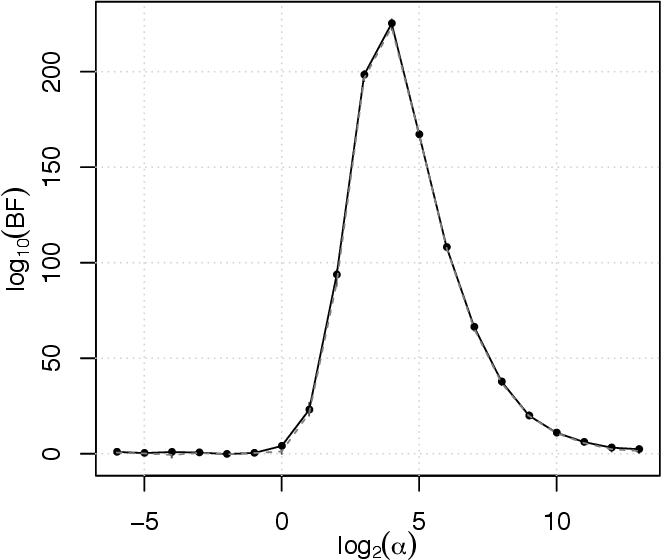}}}
\end{center}
\caption{Egyptian skull data analysis. \h{For the single variable analyses, each Bayes factor evaluation was performed using 10,000 importance samples and repeated 8 times. Vertical segments and filled shapes show the minimum, maximum and the median of the log Bayes factor values. A combined evaluation of the same, obtained by pooling all 80,000 samples together, is reported via the dashed line. Same evaluation strategy was adopted for the joint analysis, but here each evaluation was done using 50,000 importance samples.}}
\label{fig:skull}
\end{figure}

\begin{example}
\label{ex:skull}

The well known Egyptian skulls dataset \citep{hand1994handbook} consists of $p=4$ measurements \h{(MB: maximal breadth, BH: basibregmatic height, BL: basialveolar length, and, NH: nasal height, all in mm)} 
taken on $n=150$ ancient Egyptian skulls from five time epochs during 4000 B.C.~and 200 A.D. Mean effect of time was removed by running a multivariate analysis of variance, and we test the residuals for normality. \h{Ties were broken by injecting a small jitter to each observation with random uniform draws between $[-1/60, 1/60]$}. A chi-square QQ-plot (not shown) of the Mahalanobis distance squares showed a faint deviation from normality. Mardia's skewness and kurtosis tests failed to reject normality with p-values $\approx 0.5$. 

\h{
Interestingly, our approach showed moderately strong evidence against normality when the four measurements were analyzed separately, but little evidence against normality for their joint distribution (Figure \ref{fig:skull}). This situation is very different from the copula example considered above, where marginal distributions were normal but non-normality could be detected for the joint distribution. A potential explanation is that given the predictive matching property of our approach, it will take more observations in 4 dimensions to detect non-normality than in a univariate case. For example, if we had only 5 observations from a 4 dimensional distribution, our approach is guaranteed to produce a Bayes factor of 1 in the joint analysis, but may be able to detect non-normality of the marginals. However, this does not fully explain the stark difference between the univariate and the multivariate results reported in Figure \ref{fig:skull}. It is possible that the joint distribution deviates from normality in ways that are not well captured by the mixture alternative proposed here, whereas univariate projections are well approximated as mixtures of normals. 
}

\end{example}

\section{Numerical experiments}
\label{S:simu}

\subsection{Power-size comparison against Polya tree and Anderson--Darling}
\label{SS:comp}

Comparing the minimum Bayes factor against a threshold gives a goodness-of-fit test of normality in the classical sense, subject to size and power calculations. Size may be approximated by simulating data from the null. Due to Lemma \ref{lem:inv}, it is sufficient to simulate under any one normal distribution because of the location-scale invariance nature of our alternative specification. We ran a simulation study to compare size and power of the resulting tests to tests derived similarly from the Polya tree approach of \citet{berger2001bayesian} and the classical Anderson-Darling tests. For our approach, minimum Bayes factor was calculated over $\alpha \in [2^{-6},2^4]$. For the Polya tree tests, we used the fixed-partition (Type~2) version \citep[][Equation~2]{berger2001bayesian} with the function $d(\eps_m) = h^{-1} 4^m$ for scale parameter and calculated minimum Bayes factor over $h \in [2^{-6},2^4]$.

Size calculations were done with 100 datasets each consisting of $n = 100$ draws from the standard normal distribution. For power calculation under the alternative, we considered three non-normal distributions: Student-t with 3 degrees of freedom, skew-normal with shape parameter 10, and uniform on the interval $(-1,1)$. For any of these three distributions, power was approximated by simulating 100 datasets each with $n = 100$ draws from the distribution. Results are shown in Figure~\ref{fig:roc} as power-size curves for three sets of tests for each of the chosen non-normal distributions.  For all three distributions, the Dirichlet process mixture tests perform the best, producing higher power at a lower smaller size. Anderson--Darling tests generally outperform the Polya tree tests.  The results for the uniform distribution were surprising to us as we expected the Polya-tree alternative to beat Dirichlet process mixtures at detecting discontinuities. 

\begin{figure}
\begin{center}
\subfigure[Student-t, degrees of freedom 3]{\scalebox{0.4}{\includegraphics{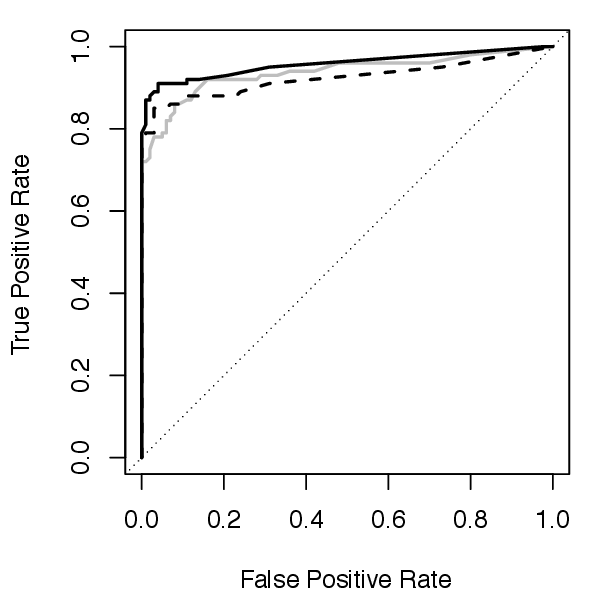}}}
\subfigure[Skew-normal, shape = 10]{\scalebox{0.4}{\includegraphics{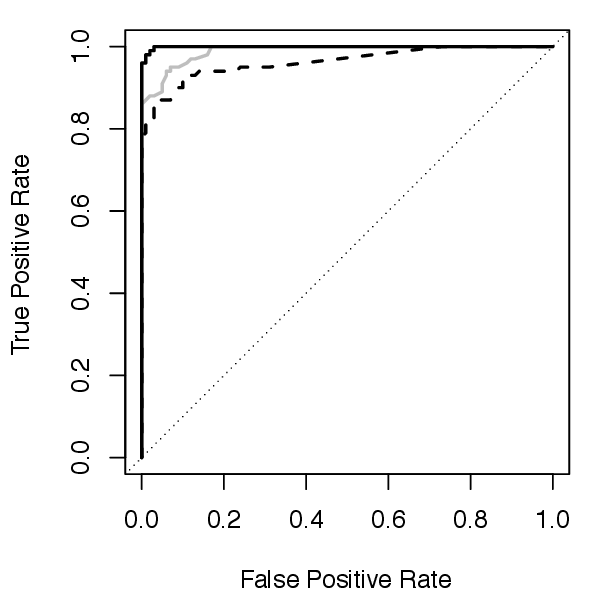}}}
\subfigure[Uniform on $(-1,1)$]{\scalebox{0.4}{\includegraphics{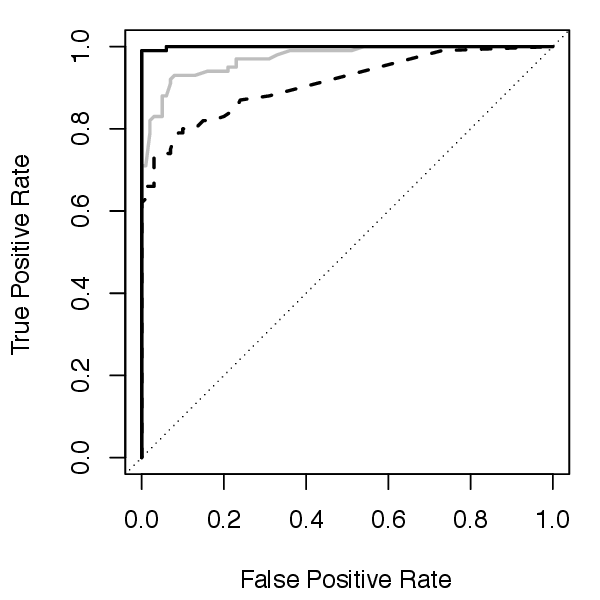}}}
\end{center}
\caption{Power-size curves for Dirichlet process mixture (solid, black), Polya tree (broken, black) and Anderson--Darling (solid, gray) tests. Each panel presents power for a specific alternative benchmarked against size under the null hypothesis of Gaussianity.}
\label{fig:roc}
\end{figure}

\subsection{Bayes factor consistency}
\label{SS:consistency}

A desirable frequentist property of a Bayesian testing procedure is Bayes factor consistency, i.e., the Bayes factor should converge to $\infty$ asymptotically under the null, and to $0$ under the alternative as sample size grows to infinity. It follows from a simple argument \citep[e.g.][Section 4]{tokdar2010bayesian} that $B \to \infty$ almost surely whenever $X_i$'s are drawn from a non-normal distribution that is in the Kullback--Leibler support of the alternative prior distribution \citep{ghosh2003bayesian}. Substantial existing literature \citep{ghosal1999posterior, tokdar2006posterior, ghosal2007posterior, shen2013adaptive} indicates that Dirichlet process mixtures of normals prior distributions have broad Kullback--Leibler support which can be characterized by mild continuity and tail conditions. The same could be expected for our non-parametric prior, although formal details will be different. Proving $B \to \infty$ under the null is much more challenging and requires showing the non-parametric prior is less densely packed around any normal distribution than what a parametric prior will be \citep[][Section 4]{tokdar2010bayesian}. Such lower bounds on prior concentration and technical tools needed to prove them are scarce in the literature and have only been established formally for relatively simple kernel mixtures \citep{mcvinish2009bayesian}.

We ran a simulation study to assess Bayes factory consistency under the null. We simulated 100 independent standard normal data sequences of length 5000, and evaluated the Bayes factor (with fixed $\alpha=1$) at several points $n$ along each of the sequences.  The Bayes factor paths are displayed in Figure~\ref{fig:bf.limit} along with the 2.5\%, 50\%, and 97.5\% quantile paths.  The Bayes factor sampling distribution appears to be shifting upwards with $n$. Moreover, $\prob(B > 1)$ seems to converge to 1 with $n$ and appears to be at least 0.975 for $n \geq 5000$. Although this experiment does not cover all interesting scenarios, it gives substantial evidence that $B \to \infty$ in probability, under the null, as $n \to \infty$.  

\begin{figure}
\begin{center}
\scalebox{0.5}{\includegraphics{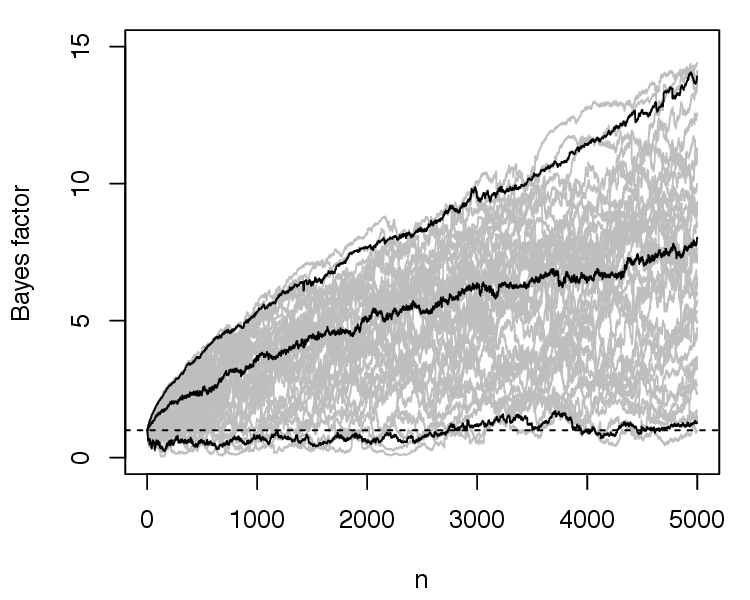}}
\end{center}
\caption{Bayes factor sample paths, with 2.5\%, 50\%, and 97.5\% summaries, when 100 datasets were generated from a standard normal distribution. Each path represents one dataset, depicting as a function of $n \in \mathbb{N}$, the Bayes factor values calculated based on the first $n$ observations in the dataset. About half of the sample paths are shown to improve clarity.}
\label{fig:bf.limit}
\end{figure}

\h{

\section{Concluding remarks}
\label{S:discuss}

We have presented here a novel Bayesian assessment of normality for univariate or multivariate data under the formal guidelines of null embedding and predictive matching advocated by \citet{berger2001bayesian}. A broad nonparametric alternative to normality is proposed based on a Dirichlet process mixture of normals. We show that the alternative space partitions into disjoint sets each of which can be mapped to a single Gaussian density identified by its location and scale. Each partition consists of densities that are clustering based granulations of the corresponding Gaussian. 
Our specification relies on a new type of Dirichlet process mixture of normals that enables such an embedding, generalizing the constructions of \citet{griffin2010default} to higher dimensions. 

A key theoretical contribution lies in establishing a predictive matching property of this new alternative class when the left Haar prior is used on the location and scale parameters for both the null and alternative hypotheses. Consequently, the new test remains deliberately neutral between the null and the alternative until at least $n = p+1$ samples are available, which is the minimum sample size needed to estimate the location and scale parameters of a $p$ dimensional normal distribution.

A sequential importance sampling Monte Carlo estimate is proposed toward reasonably fast and reproducible evaluations of Bayes factors. Our development utilizes a Rao--Blackwellized extension of the sequential imputation technique of \citet{liu1996nonparametric} where each atom of the Dirichlet process distribution is represented by a set of particles whose weights are updated every time a new sample is associated with that atom. An R package (\texttt{gausstest}) is currently under development. A preliminary version is available at \url{https://github.com/tokdarstat/gausstest}.

We have presented simulation studies demonstrating that the proposed method has higher discriminatory power when the true, data-generating distribution is a smooth departure from normality, and also avoids over-fitting when the true distribution is normal. We have also presented numerical evidence that the resulting Bayes factor is likely to be asymptotically consistent in discriminating between Gaussian and non-Gaussian distributions. However, much work remains to be done to establish this rigorously. 
  
Our analysis of the Egyptian skull data opens up new questions. For this dataset, our new test detected non-Gaussianity individually for each of the four measurements, but found little evidence against Gaussianity for their joint distribution. Although our test requires more data to detect non-Gaussianity in higher dimensions, this fact alone does not fully explain the contrasting results we reported for this analysis. It is not clear under what simulation models, if any, similar phenomena could be observed. It is also not clear what insights practitioners would draw when such contrasting results manifest.

Our approach does involve a single tuning parameter: the precision parameter $\alpha$ of the Dirichlet distribution. Our numerical studies show that the Bayes factor is quite sensitive to this parameter value. Following \cite{berger2001bayesian}, we have adopted the approach of evaluating the Bayes factor across a wide range of precision parameter values, and reporting the minimum Bayes factor as the (worst case) evidence against Gaussianity. We believe this to be reasonable as each distinct value of $\alpha$ corresponds to a distinct alternative subspace (Figure \ref{fig:omegas}), and a goodness of fit test should try to gather maximum evidence against the null model within the full space of plausible alternatives. 

A related issue is that we have opted for a very specific relationship between $\alpha$ and the shape parameters $\omega_1$ and $\omega_2$, which control the relative spread of the clusters with respect to the total data spread. So, $\alpha$ not only controls the number of clusters, it also controls the relative volume shares of the clusters. We have adopted a very specific relationship that achieves certain limiting properties, but our choice is rather ad-hoc. A more systematic study is needed to understand this issue better.

Despite the promise of our computational algorithm, scaling it up to large number of observations or high dimensional data remains a formidable challenge. Importance sampling may not be the right approach for even moderately high dimensional observations (say $p > 5$). Sequential imputation may not be very efficient when $n$ is reasonably large (say $n > 1000$). It will be interesting to see if sequential Monte Carlo techniques e.g., \citet{griffin2017sequential} could yield more efficient computing algorithms. 

The goodness of fit test proposed here is exclusive to detection of non-Gaussianity. It is not immediately clear what other kinds of parametric models could be assessed with a similar approach. Our construction of the local alternatives utilizes the fact that a convolution of Gaussians is a Gaussian itself. The same holds for all infinitely divisive distributions. Conceivably, similar constructions could be done for testing the fit of a specific infinitely divisible, location-scale family model. 
}

\section*{Acknowledgment}

We thank the Editor and two reviewers whose comments on an earlier draft led to considerable improvement of the article.

\section*{Appendix}

\appendix
\section{Proofs}
\label{apndx:proofs}

\begin{proof}[Proof of Theorem~\ref{thm:neg.cov}]
Since $(U \mid V) \sim \nm(0, I_p-V)$, it follows that $\E(U^\top U \mid V) = \tr\E(UU^\top \mid V) = \tr(I_p-V) = p - \tr V$, where $\tr A$ returns the trace of a symmetric matrix $A$.  Then 
\begin{equation}
\label{eq:cov}
\cov(U^\top U, \det V) = \cov\{\E(U^\top U \mid V), \det V\} = -\cov(\tr V, \det V).
\end{equation}
According to \citet[][p.~112]{muirhead2005aspects}, the eigenvalues of $V \sim \be(\omega_1,\omega_2)$ are distributionally equivalent to the eigenvalues of $A(A+B)^{-1}$, where $A \sim \wish(2\omega_1, I_p)$ and $B \sim \wish(2\omega_2,I_p)$, independent.  Since $\tr V$ and $\det V$ are both coordinate-wise increasing functions of these eigenvalues, it follows from the main result of \citet[][Sec.~5]{dykstra1978positive} that $\cov(\tr V, \det V) \geq 0$.  This, along with \eqref{eq:cov}, completes the proof.  
\end{proof}

\begin{proof}[Proof of Theorem~\ref{thm:bpv}]
Here integrals shall be carried out in the form of exterior products of differentials, which we denote as $(d\mu)$, etc.  Use of exterior products leads to simpler change of variable formulas than those offered by traditional Jacobians.  The changes of variable used below, and the corresponding exterior products, can be found in \citet[][Chap.~2]{muirhead2005aspects}.  

Let $F^\star \sim \Pi^\star$ be the random measure that characterizes the absolutely continuous, rotation-invariant, location-scale family $\Pi_{\mu,\sigma}$, and let $f^\star$ denote its Radon--Nikodym derivative with respect to Lebesgue measure on $\RR^p$.  By Fubini's theorem, 
\begin{align*}
m_{\Pi,p+1}(x_1,\ldots, x_{p+1}) & = \int \Bigl[ \int_{\RR^p \times \TT_p} \Bigl\{ \prod_{i=1}^{p+1} (\det \sigma)^{-1} f^\star(\sigma^{-1}(x_i-\mu)) \Bigr\} \,d\pi_L(\mu,\sigma) \Bigr] \,d\Pi^\star(f^\star) \\
& = \int \Bigl[ \int_{\RR^p \times \TT_p} \Bigl\{ \prod_{i=1}^{p+1} f^\star(\sigma^{-1}(x_i-\mu)) \Bigr\} (\det\sigma)^{-(p+1)} \prod_{i=1}^p \sigma_{ii}^{-i} \, (d\mu)(d\sigma) \Bigr] \,d\Pi^\star(f^\star)\\
& = \int I(f^\star) \, d\Pi^\star(f^\star)
\end{align*}
where $I(f^\star)$ is the integral over $\RR^p \times \TT_p$ inside the square brackets above.  A change of variable $\tau = \sigma^{-1}$ implies $\tau$ ranges over $\TT_p$, $\sigma_{ii} = \tau_{ii}^{-1}$, $\det\sigma = (\det\tau)^{-1}$, and $(d\sigma) = (\det\tau)^{-(p+1)}\,(d\tau)$.  Therefore, 
\[ I(f^\star) = \int_{\RR^p \times \TT_p} \Bigl\{ \prod_{i=1}^{p+1} f^\star(\tau(x_i-\mu)) \Bigr\} \prod_{i=1}^p \tau_{ii}^{i} \, (d\mu)(d\tau). \]
Because of rotation-invariance, for any orthogonal matrix $\eta$, the random variables $I(f^\star)$ and $I(f^\star_{0,\eta})$ are identical in distribution.  Let $H$, with $dH(\eta) = (\eta^\top d\eta)/c_p$, denote the Haar measure on $\OO_p$, the space of $p \times p$ orthogonal matrices.  Then we must have 
\begin{align*}
m_{\Pi, p + 1}(x_1, \ldots, x_{p + 1}) & = \int_{\OO_p} \int I(f^\star_{0,\eta}) \, d\Pi^\star(f^\star) \, dH(\eta)\\ 
& = \int  \int_{\RR^p \times \TT_p \times \OO_p} c_p^{-1} \Bigl\{ \prod_{i=1}^{p+1} f^\star(\eta\tau(x_i - \mu)) \Bigr\} \prod_{i=1}^p \tau_{ii}^{i} (d\mu)(d\sigma) (\eta^\top d\eta) d\Pi(f^\star)\\
& = \int J(f^\star) \, d\Pi(f^\star)
\end{align*}
where $J(f^\star)$ is the inner integral above.
If we let $\nu = \eta\tau$, then $\nu$ ranges over the space $\mathbb{G}_p$ of $p \times p$ non-singular matrices, $\det\tau = |\det\nu|$, and $(d\nu) = \prod_{i=1}^p \tau_{ii}^{i-1}(d\tau)(\eta^\top d\eta)$.  Therefore, 
\[ J(f^\star) = c_p^{-1} \int_{\RR^p \times \mathbb{G}_p} \Bigl\{ \prod_{i=1}^{p+1} f^\star(\nu(x_i-\mu)) \Bigr\} |\det\nu| \, (d\mu)(d\nu). \]
Note that $(\mu,\nu)$ effectively ranges over $\RR^{p \times (p+1)}$, the $(p+1)$-fold product of $\RR^p$.  Make a final change of variable, $z_i = \nu(x_i-\mu)$, $i =1,\ldots,p+1$.  The inverse transformation is given by $\nu = \tilde z \tilde x^{-1}$, $\mu = x_{p+1} - \tilde x \tilde z^{-1} z_{p+1}$, where $\tilde x$ is as in the statement of the theorem and, likewise, $\tilde z$ is the $p \times p$ matrix with columns $\tilde z_i = z_i - z_{p+1}$.  Therefore, the Jacobian equals $|\det \tilde z||\det \tilde x|^{-(p-1)}$ and so 
\[ J(f^\star) = c_p^{-1} \int_{\RR^{p \times (p+1)}} \Bigl\{ \prod_{i=1}^{p+1} f^\star(z_i) \Bigr\} |\det \tilde x|^{-p} \,d(z_1,\ldots,z_{p+1}) = c_p^{-1} |\det\tilde x|^{-p}, \]
since $\int f^\star(z_i) \,dz_i = 1$ with $\Pi^\star$-probability~1 for each $i \in 1, \ldots, p+1$.  The claim \eqref{eq:pred} now follows immediately since $J(f^\star)$ is constant in $f^\star$.
\end{proof}

\begin{lemma}
\label{lem:rotation}
For $F^\star = \int \nm(u,v) \,d\Psitilde(u,v)$ with $\Psitilde \sim \dpp(\alpha,\Psi)$ and any $\eta \in \OO_p$, both $F^\star$ and $F_{0,\eta'}^\star$ have the same distribution.  
\end{lemma}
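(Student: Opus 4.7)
The plan is to exploit the invariance of the base measure $\Psi$ under the rotation map $T_\eta: (u, v) \mapsto (\eta u, \eta v \eta')$, and then to use the standard fact that pushing a Dirichlet process forward by a measurable map leaves the Dirichlet family stable. The composition of these two ingredients, combined with the mixture representation of $F^\star$, forces $F_{0,\eta'}^\star$ and $F^\star$ into the same distribution.

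First I would show that $\Psi$ is $\OO_p$-invariant in the sense that $(U, V) \sim \Psi$ implies $(\eta U, \eta V \eta') \sim \Psi$ for every $\eta \in \OO_p$. The marginal part is immediate from \eqref{eq:mbeta}: the density depends on $V$ only through $\det V$ and $\det(I_p - V)$, both invariant under $V \mapsto \eta V \eta'$, and $\eta V \eta' \in \SS_p$ since its eigenvalues coincide with those of $V$. For the conditional part, $\eta U \mid V \sim \nm(0, \eta(I_p - V)\eta') = \nm(0, I_p - \eta V \eta')$, which is exactly the $\Psi$-conditional law of the first coordinate given the second coordinate $\eta V \eta'$. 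Hence $\Psi \circ T_\eta^{-1} = \Psi$.

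Next I would invoke the standard fact that if $\Psitilde \sim \dpp(\alpha, \Psi)$ and $T$ is any measurable map on $\RR^p \times \SS_p$, then the random measure $\Psitilde \circ T^{-1}$ is distributed as $\dpp(\alpha, \Psi \circ T^{-1})$; this follows directly from the Dirichlet-distribution of $\Psitilde$ on partitions, transferred through $T^{-1}$. Applied with $T = T_\eta$ and combined with the previous step, this gives that $\Psitilde \circ T_\eta^{-1}$ has the same law as $\Psitilde$.

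Finally I would translate the rotation on the $F^\star$ side into a rotation on the $\Psitilde$ side. If $Z \sim F^\star$, then conditionally on $(u, v)$ drawn from $\Psitilde$ we have $Z \sim \nm(u, v)$, hence $\eta' Z \sim \nm(\eta' u, \eta' v \eta)$. Therefore
\begin{equation*}
F^\star_{0, \eta'} \;=\; \int \nm(\eta' u, \eta' v \eta) \, d\Psitilde(u, v) \;=\; \int \nm(u', v') \, d(\Psitilde \circ T_{\eta'}^{-1})(u', v'),
\end{equation*}
which has the same distribution as $\int \nm(u', v') \, d\Psitilde(u', v') = F^\star$ by the previous two steps. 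The mildly fiddly bit is keeping the direction of the change of variable straight (since $F_{0,\eta'}^\star$ is the law of $\eta' Z$, the pushforward must use $T_{\eta'}$, not $T_\eta$), but this is bookkeeping rather than a real obstacle; the genuine content is the $\OO_p$-invariance of $\Psi$, which is built into the choice of a multivariate beta / normal base measure.
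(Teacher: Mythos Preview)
Your proof is correct and follows essentially the same route as the paper: both arguments reduce to showing that the base measure $\Psi$ is invariant under $(u,v)\mapsto(\eta' u,\eta' v\eta)$, then use the pushforward stability of the Dirichlet process to conclude that the transformed mixing distribution has the same law as $\Psitilde$. The paper cites Muirhead for the multivariate beta invariance where you argue directly from the density form, but the structure of the argument is identical.
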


\begin{proof}
For $\eta \in \OO_p$, $d\nm(\eta x \mid u, v) = d\nm(x \mid \eta^\top u, \eta^\top v \eta)$ and, therefore, $F_{0,\eta}^\star = \int \nm(u,v) \,d\Psitilde_\eta(u,v)$, with $\Psitilde_\eta \sim \dpp(\alpha,\Psi_\eta)$, where $\Psi_\eta$ denotes the law of $(U_\eta,V_\eta) = (\eta^\top U, \eta^\top V\eta)$ when $(U,V) \sim \Psi$.  But if $V \sim \be(\omega_1,\omega_2)$, then also $V_\eta \sim \be(\omega_1,\omega_2)$ \citep[][Exercise 3.22d]{muirhead2005aspects} and if $U \mid V \sim \nm(0,I_p-V)$, then $U_\eta \mid V_\eta \sim \nm(0,I_p-V_\eta)$.  Therefore, by construction of $\Psi$, we have $\Psi_\eta = \Psi$ and, hence, $F^\star$ and $F_{0,\eta}^\star$ have the same distribution.  
\end{proof}

\begin{lemma}
\label{lem:inv}
Let $\Pi = \int \Pi_{\mu, \sigma}d\pi_L(\mu, \sigma)$ where $\{\Pi_{\mu, \sigma}, (\mu, \sigma) \in \RR^p \times \mathbb{T}_p\}$ is an absolute continuous, location-scale family. Then, for any $a \in \RR^p$, any $p\times p$ non-singular matrix $S$, any integer $n \ge p + 1$ and any $x_1, \ldots, x_n \in \RR^p$, $m_{\Pi, n}(a + Sx_1, \ldots, a + Sx_n) = |\det S|^{-(n - 1)} \cdot m_{\Pi, n}(x_1, \ldots, x_n)$. Consequently, the Bayes factor $B$ for \eqref{eq:null}, \eqref{eq:alt} is invariant under location and scale transformations of the data.
\end{lemma}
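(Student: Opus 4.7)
The plan is to reduce the marginal likelihood of the transformed data to that of the original data by a change of variables on $(\mu,\Lambda)$, exploiting that $\pi_H$ is the right Haar measure on the semidirect product $\RR^p \rtimes \LL_p$. First I expand
\begin{equation*}
m_{\Pi,n}(y_{1:n}) = \int\int_{\RR^p \times \LL_p} \prod_{i=1}^n |\det\Lambda|^{-1}\, f^\star\bigl(\Lambda^{-1}(y_i-\mu)\bigr)\, d\pi_H(\mu,\Lambda)\, d\Pi^\star(f^\star),
\end{equation*}
using the characterization of the absolutely continuous location-scale family by a random measure $F^\star \sim \Pi^\star$ with Lebesgue density $f^\star$.

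With $y_i = a + S x_i$, the natural substitution is $\mu = a + S\mu'$ and $\Lambda = S\Lambda'$. Under it $\Lambda^{-1}(y_i-\mu) = \Lambda'^{-1}(x_i-\mu')$ exactly, the product of densities picks up an overall factor $|\det S|^{-n}$, and a careful Jacobian computation on $d\pi_H$---using the same exterior-product calculus as in the proof of Theorem~\ref{thm:bpv}---shows that the right Haar structure absorbs all but one factor of $|\det S|$, leaving the stated multiplier $|\det S|^{-(n-1)}$. A perhaps cleaner route, which sidesteps most of the bookkeeping, is to establish the base case $n = p+1$ directly from Theorem~\ref{thm:bpv}: since $\tilde y = S \tilde x$ gives $|\det\tilde y| = |\det S|\,|\det\tilde x|$, formula~\eqref{eq:pred} yields $m_{\Pi,p+1}(y_{1:p+1}) = |\det S|^{-p}\, m_{\Pi,p+1}(x_{1:p+1})$, which is the claimed identity at $n=p+1$. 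For $n > p+1$ I would induct, factoring $m_{\Pi,n}(y_{1:n}) = m(y_n \mid y_{1:n-1})\cdot m_{\Pi,n-1}(y_{1:n-1})$: the inductive hypothesis supplies $|\det S|^{-(n-2)}$, and the equivariance of the posterior predictive density under $x \mapsto a+Sx$---a consequence of the location-scale structure---supplies the remaining $|\det S|^{-1}$.

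For the stated consequence, both the null model~\eqref{eq:null} (the trivial absolutely continuous location-scale family with $F^\star = \nm(0,I_p)$) and the alternative~\eqref{eq:alt} (the absolutely continuous location-scale family generated by $F^\star \sim \dpm_{0,I_p}(\alpha,\Psi)$, as already noted in the discussion preceding Theorem~\ref{thm:pred.match}) satisfy the lemma's hypotheses. Applying the lemma to the numerator and denominator of the Bayes factor~\eqref{eq:bayes.factor} multiplies each by the same $|\det S|^{-(n-1)}$, which cancels in the ratio. The main technical hurdle is the Jacobian bookkeeping for the substitution $\Lambda \mapsto S\Lambda$ restricted to $\LL_p$; for a genuinely non-triangular $S$ one additionally invokes Lemma~\ref{lem:rotation} to absorb an orthogonal factor of $S$ into $F^\star$, reducing to the triangular case.
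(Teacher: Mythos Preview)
Your outline is workable, but the paper's route is both cleaner and shorter. Rather than substituting directly on $\RR^p \times \LL_p$---which, as you note, only handles lower-triangular $S$ and then needs a separate QR-plus-rotation-invariance reduction for general $S$---the paper first lifts the inner integral from $\LL_p$ to the full group $\mathbb{G}_p$ of nonsingular matrices by the same rotation-invariance trick used in the proof of Theorem~\ref{thm:bpv}. Once the integral reads
\[
I(x_{1:n}\mid f^\star) = c_p^{-1}\int_{\RR^p\times\mathbb{G}_p}\Bigl\{\prod_{i=1}^n f^\star\bigl(\nu(x_i-\mu)\bigr)\Bigr\}\,|\det\nu|^{\,n-p}\,(d\mu)(d\nu),
\]
a single change of variables $\tilde\mu = S^{-1}(\mu-a)$, $\tilde\nu = \nu S$ handles every nonsingular $S$ at once, and the exterior-product Jacobian $(d\mu)(d\nu)=|\det S|^{-(p-1)}(d\tilde\mu)(d\tilde\nu)$ together with $|\det\nu|^{n-p}=|\det S|^{-(n-p)}|\det\tilde\nu|^{n-p}$ delivers the factor $|\det S|^{-(n-1)}$ in one line. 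The advantage over your approach is that the triangular-versus-general-$S$ case split, and the attendant left-Haar-versus-right-Haar bookkeeping on $\LL_p$, simply disappear once the domain is $\mathbb{G}_p$.

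Your induction alternative does not really sidestep the work: establishing that the one-step predictive $m(y_n\mid y_{1:n-1})$ picks up exactly $|\det S|^{-1}$ amounts to showing the posterior on $(\mu,\Lambda)$ is equivariant under $x\mapsto a+Sx$, and for non-triangular $S$ that runs into the same $\LL_p$-versus-$\mathbb{G}_p$ obstacle you already flagged for the direct substitution. So the induction defers rather than resolves the difficulty.
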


\begin{proof}
Let $F^\star \sim \Pi^\star$ denote the characterizing random measure of $\{\Pi_{\mu, \sigma}: \mu \in \RR^p, \sigma \in \mathbb{R}_p\}$ with Lebesgue density $f^\star$. As in the proof of Theorem \ref{thm:bpv} we can write $m_{\Pi, n}(x_{1:n}) = \int J( x_{1:n}\, |\, f^\star) d\Pi^\star(f^\star)$ where
\begin{align*}
J(x_{1:n}\mid f^\star) & = c_p^{-1}\int_{\RR^p \times \mathbb{T}_p \times \OO_p}  \left\{\prod_{i = 1}^n f^\star(\tau(x_i - \mu))\right\} \prod_{i = 1}^p \tau_{ii}^{n - p + i - 1} (d\mu)(d\tau)(\eta'd\eta)\\
& = c_p^{-1} \int_{\RR^p \times \mathbb{G}_p} \left\{\prod_{i = 1}^n f^\star(\nu(x_i - \mu))\right\} |\det \nu |^{n - p} (d\mu)(d\nu).
\end{align*}
Let $a + Sx_{1:n}$ denote the transformed data $(a + S x_1, \ldots, a  + S x_n)$. Then, 
\[
J(a + Sx_{1:n}\mid f^\star) = c_p^{-1}|\det S|^{-(n-p)} \int_{\RR^p \times \mathbb{G}_p}  \left\{\prod_{i = 1}^nf^\star(\tilde \nu(x_i - \tilde \mu))\right\} |\det \tilde\nu|^{n - p} (d\mu)(d\nu),
\]
with change of variables $\tilde \mu = S^{-1}(\mu - a)$ and $\tilde \nu = \nu S$. The Jacobian of this transformation is $(d\mu)(d\nu) = |\det S|^{-(p - 1)}(d\tilde \mu)(d\tilde \nu)$ and hence $J(a + S x_{1:n} | f^\star) = |\det S|^{-(n - 1)}J(x_{1:n} | f^\star)$.
\end{proof}

\section{Matrix F distribution}
\label{apndx:matF}
Let $S = V^\top V$ be the Cholesky factorization of $S$ where $V$ is an upper triangular matrix (as would be returned by the function call {\tt V = chol(S)} in R). A random draw of $\Sigma \sim \fdist(\nu - (p-1),\nu, S)$ could be obtained by setting $\Sigma = (UU^\top)^{-1}$ where $U = V^{-1}\Omega^{-1}\Lambda$ and $\Omega$ and $\Lambda$ are upper triangular matrices obtained as:
\begin{equation}
\begin{split}
\Omega^2_{jj} \sim \chi^2_{\nu - j + 1}, 1 \le j \le p;\quad\Omega_{ij} \sim \nm(0,1), 1 \le i < j \le p;\quad \Omega_{ij} = 0~\mbox{otherwise},\\
\Lambda^2_{jj} \sim \chi^2_{\nu - p + 1}, 1 \le j \le p;\quad\Lambda_{ij} \sim \nm(0,1), 1 \le i < j \le p; \quad \Lambda_{ij} = 0~\mbox{otherwise}.
\end{split}
\end{equation}
Notice that $U$ is upper triangular and one could write $\Sigma = \sigma \sigma^\top$ with $\sigma = (U^\top)^{-1}$ being lower triangular, a convention we have used throughout the paper. By introducing another upper triangular matrix $\Delta = \Omega^{-1}\Lambda$, we can express the probability density function of $\Sigma$ as
\[
p(\Sigma) = \frac{\Gamma_p(\nu)}{\Gamma_p(\nu/2)^2 |S|^{(p+1)/2}} \times \frac{|\Delta|^{\nu+p+1}}{|I_p + \Delta^\top \Delta|^\nu}.
\]
This formula is obtained by simplifying the general formulas derived in \citet{mulder2018matrix}.

\bibliographystyle{chicago} 
\bibliography{mybib}

\end{document}